\tikzstyle{subgroup}=[scale=1]
\tikzstyle{circ}=[draw,shape=circle,fill=blue,scale=0.5]
\newtheorem{theorem}{Theorem}
\newtheorem{proposition}{Proposition}
\newtheorem{corollary}{Corollary}
\newtheorem{lemma}{Lemma}
\def\CD#1{\mathcal{CD}#1}
\newcommand{\mydistance}{.6cm}
\title{\bf Finite Groups with a Trivial Chermak-Delgado Subgroup}
\author{Ryan McCulloch}
\address{Department of Mathematics, University of Bridgeport, Bridgeport, CT 06604}
\email{rmccullo@bridgeport.edu}
\date{\today}
\begin{document}

\begin{abstract}
The Chermak-Delgado lattice of a finite group is a modular, self-dual sublattice of the lattice of subgroups of $G$.  The least element of the Chermak-Delgado lattice of $G$ is known as the Chermak-Delgado subgroup of $G$.  This paper concerns groups with a trivial Chermak-Delgado subgroup.  We prove that if the Chermak-Delgado lattice of such a group is lattice isomorphic to a Cartesian product of lattices, then the group splits as a direct product, with the Chermak-Delgado lattice of each direct factor being lattice isomorphic to one of the lattices in the Cartesian product.  We establish many properties of such groups and properties of subgroups in the Chermak-Delgado lattice.  We define a CD-minimal group to be an indecomposable group with a trivial Chermak-Delgado subgroup.  We establish lattice theoretic properties of Chermak-Delgado lattices of CD-minimal groups.  We prove an extension theorem for CD-minimal groups, and use the theorem to produce twelve examples of CD-minimal groups, each having different CD lattices.  Curiously, quasi-antichain $p$-group lattices play a major role in the author's constructions.
\end{abstract}

\keywords{group theory, Chermak-Delgado, subgroup lattice, CD-minimal, indecomposable, CD-simple}

\subjclass[2000]{20D30 (primary), and 20D60 (secondary)}

\maketitle

\section{Indecomposability and CD-minimal Groups}

Let $G$ be a finite group and $H \leq G$.  Then $m_G(H) = |H||C_G(H)|$ is the Chermak-Delgado measure of $H$ in $G$.  Let $m^*(G) = \max \{ m_G(H) \,\, | \,\, H \leq G \}$ and then define $\CD(G) = \{ H \leq G \,\, | \,\, m_G(H) = m^*(G) \}$.  The subgroup collection $\CD(G)$ forms a sublattice of the lattice of subgroups of $G$.  Furthermore, if $H,K \in \CD(G)$ and $\sigma \in Aut(G)$, then $H^{\sigma} \in \CD(G)$, $\langle H,K \rangle = HK$, $C_G(H) \in \CD(G)$, $C_G(H \cap K) = C_G(H)C_G(K)$, and $C_G(C_G(H)) = H$.  The least element, $T$, of $\CD(G)$ is known as the \textbf{Chermak-Delgado subgroup of} \bm{$G$}, and $T$ is a characteristic, abelian subgroup of $G$ which contains the center of $G$.
 
This modular, self-dual sublattice of the lattice of subgroups of $G$ was first introduced in \cite{Che89}.  Proofs of the properties of $\CD(G)$ stated in the previous paragraph are found in Section 1.G of \cite{Isa08}.  

We lay down some scaffolding that is useful in proving results about the Chermak-Delgado lattice.  Proposition 1 below is a variant on ideas of Ore, see \cite{Ore39}.  Given a nonempty subset $Y \subseteq G$, and an element $g \in G$, we denote $Yg = \{ yg \, | \, y \in Y \}$ and $Y^g = \{ y^g \, | \, y \in Y \}$.  Given $H \leq G$, and $X \subseteq G$ a subset of $G$, $H^X = \langle H^x \, | \, x \in X \rangle$.  When $X = G$ this is the normal closure of $H$ in $G$.  Note that $H^{\emptyset} = 1$, since the subgroup generated by the empty set is the identity subgroup.  

\begin{proposition}\label{prop1.4}
Suppose $H,K \leq G$, and $HH^x = H^xH$ for all $x \in K$.  If $K \leq H^K$, then $K \leq H$.
\end{proposition}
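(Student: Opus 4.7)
The plan is first to strengthen the hypothesis to pairwise permutability of all $K$-conjugates of $H$, then reduce the conclusion to showing that $K$ normalizes $H$, and finally to force this via a minimal counterexample.

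Since $K$ is a subgroup, for any $x,y\in K$ we have $yx^{-1}\in K$, so the hypothesis supplies $HH^{yx^{-1}}=H^{yx^{-1}}H$; conjugating this by $x$ yields $H^xH^y=H^yH^x$. Thus all $K$-conjugates of $H$ pairwise permute, and by a standard Ore-type argument $H^K$ coincides with the product of the distinct $K$-conjugates of $H$ (taken in any order), so that every element of $H^K$ admits a representation $h_1^{x_1}\cdots h_n^{x_n}$ with $h_i\in H$ and $x_i\in K$. Conjugation by $k\in K$ sends $H^x$ to $H^{xk}$, so $K$ normalizes $H^K$, giving $\langle H,K\rangle=KH^K$; the hypothesis $K\le H^K$ then forces $\langle H,K\rangle=H^K$. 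It now suffices to prove $H^K=H$, for then $K\le H^K=H$ is immediate.

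I would argue by contradiction via a minimal counterexample $(G,H,K)$ with $G=\langle H,K\rangle=H^K$ of smallest order. Passing to $\bar G=G/\mathrm{Core}_G(H)$ preserves the permutation hypothesis (since the core lies in every $H^k$, and taking quotients respects products) and also preserves the failure $\bar K\not\le\bar H$ (since the core is contained in $H$); minimality therefore forces $\mathrm{Core}_G(H)=1$, so $H$ is core-free in $G$ and $G$ acts faithfully on the coset space $G/H$.

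The crux — and the main obstacle — is to extract a contradiction in this core-free setting. For any $k\in K\setminus H$ the representation $k=h_1^{x_1}\cdots h_n^{x_n}$ writes $k$ as a word mixing the conjugating elements $x_i^{\pm 1}$ (from expanding $h_i^{x_i}=x_i^{-1}h_ix_i$) with $H$-elements. I would manipulate this identity by repeatedly sliding the $x_i^{\pm 1}$'s past adjacent factors using the pairwise permutation $H^{x_i}H^{x_j}=H^{x_j}H^{x_i}$, and then interpret the resulting reduced word through the faithful action on $G/H$. The aim is to show that $k$ must fix the coset $H$ — whence $k\in H$ by core-freeness, contradicting the choice of $k$. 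The careful bookkeeping of which conjugate $H^x$ each intermediate factor lies in is where the Ore-style combinatorial ideas come into play, and it is the delicate heart of the argument.
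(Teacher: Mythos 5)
Your first two reductions are sound and match the paper's opening move: conjugating $HH^{yx^{-1}}=H^{yx^{-1}}H$ by $x$ to get pairwise permutability $H^xH^y=H^yH^x$, so that $H^K$ is the product of the $K$-conjugates in any order, is exactly how the paper begins. But the proof stops precisely where the proposition lives. Your final paragraph announces an aim rather than an argument, and the aim is in fact a restatement of the conclusion: in the action of $G$ on $G/H$, the element $k$ fixes the coset $H$ if and only if $Hk=H$, i.e.\ if and only if $k\in H$ --- no core-freeness is needed for that equivalence, and none is gained from it. So the passage to $G/\mathrm{Core}_G(H)$, while technically valid, buys nothing: you are left having to show $k\in H$ by some manipulation of the factorization $k=h_1^{x_1}\cdots h_n^{x_n}$, and the proposed ``sliding'' of conjugating letters via $H^{x_i}H^{x_j}=H^{x_j}H^{x_i}$ comes with no invariant, no termination measure, and no indication of what the reduced word should look like. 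Permuting the subgroup factors changes the individual letters $h_i^{x_i}$, and nothing in the sketch controls that. As written, the delicate heart of the argument is missing, not merely compressed.

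The missing idea --- and it is how the paper closes the argument --- is to run the minimality not over the order of $G$ but over subsets of $K$ generating $H^K$: choose $S\subseteq K$ minimal with $H^S=H^K$, normalized so that $1\in S$ (replace $S$ by $Sy_1^{-1}$; this preserves both minimality and $H^S=H^K$ since $H^{Sy_1^{-1}}=(H^S)^{y_1^{-1}}$). If $K\nleq H$ one checks $|S|\geq 2$, so write $S=\{1,x_1,\dots,x_n\}$ and $U=HH^{x_2}\cdots H^{x_n}$, giving $K\leq H^S=UH^{x_1}$. Now feed the specific element $x_1\in K$ back into this factorization: $x_1=uh^{x_1}=ux_1^{-1}hx_1$ for some $u\in U$, $h\in H$, hence $1=ux_1^{-1}h$ and $x_1=hu\in U$ (as $H\leq U$). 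Then $H^{x_1}\leq U$, so $H^{S-\{x_1\}}=H^K$, contradicting the minimality of $S$. This pivot --- applying the hypothesis $K\leq H^K$ to one distinguished conjugating element so as to delete its conjugate from the product --- is the concrete step your sketch lacks, and it replaces the entire core-free/coset-action apparatus with a three-line computation.
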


\begin{proof}
First note that the condition that  $HH^x = H^xH$ for all $x \in K$ is equivalent to the condition that  $H^xH^y = H^yH^x$ for all $x,y \in K$.  To see this, let $x,y \in K$.  Then ${(H^xH^y)}^{y^{-1}} = H^{xy^{-1}}H = HH^{xy^{-1}} = {(H^yH^x)}^{y^{-1}}$, and so $H^xH^y = H^yH^x$.  So, for any subset $S$ of $K$, $H^S$ is equal to the product of the subgroups appearing in the join, and the order in which the subgroups appear in the product does not matter. 

Suppose by way of contradiction that $K \leq H^K$ and $K \nleq H$.  Let $S$ be a minimal subset of $K$ so that $H^S = H^K$; and choose $S$ so that $S$ contains the identity.  By minimal subset of $K$ so that $H^S = H^K$, we mean that $H^S = H^K$ and for any proper subset, $T$, of $S$, $H^T \neq H^K$.  If $M$ is a minimal subset of $K$ so that $H^M = H^K$, then $M$ is nonempty.  The is true because otherwise $1 = H^{\emptyset}=H^K$ implies that $H=1$, and $K \leq H^{\emptyset} = 1$ implies that $K=1$, and so $K \leq H$, contrary to assumption.  Any such $M$ contains more than one element, as otherwise $K \leq H^k$ implies that $K = K^{k^{-1}} \leq H$, contrary to assumption.  Now we argue why we can choose $S$ to contain the identity.  Suppose $R = \{ y_1 , y_2, ... , y_m \}$ is a minimal subset of $K$ so that $H^R = H^K$.  Then $R{y_1}^{-1}$ contains the identity and is also a minimal subset of $K$ so that $H^{R{y_1}^{-1}} = H^K$.  This is true because $H^{R{y_1}^{-1}} = {(H^R)}^{{y_1}^{-1}} = {(H^K)}^{{y_1}^{-1}} = H^K$; and any proper subset, $P$, of $R{y_1}^{-1}$ has the form $P = T{y_1}^{-1}$ where $T$ is some proper subset of $R$, and so $H^{{P}} = H^{T{y_1}^{-1}} = {(H^T)}^{{y_1}^{-1}} \neq {(H^K)}^{{y_1}^{-1}} = H^K$. 

So $|S|$ is at least 2 and let $S = \{ 1, x_1, ... , x_n \}$.  Let $U= HH^{x_2}\cdots H^{x_n}$ so that $K \leq H^S = UH^{x_1}$.  So $x_1 = uh^{x_1}$ for some $u \in U$ and $h \in H$.  So $x_1 = u{x_1}^{-1}h{x_1}$, and so $1 =  u{x_1}^{-1}h$, and so ${x_1}^{-1} = {u}^{-1}{h}^{-1} = {(hu)}^{-1}$, and so $x_1 = hu \in U$.  Thus $H^{x_1} \leq U = HH^{x_2} \cdots H^{x_n}$, which implies that $H^{S - \{x_1\}} = H^K$, contradicting the minimality of $S$.
\end{proof}

\begin{corollary}\label{cor1.1}
Suppose $H,K \leq G$, and $HH^x = H^xH$ for all $x \in K$.  If $H < K$, then $H^K < K$.
\end{corollary}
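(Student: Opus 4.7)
The plan is to derive this as a direct contrapositive/application of Proposition 1. The hypothesis on permutability of conjugates is exactly the hypothesis of Proposition 1, so the corollary ought to be essentially immediate once the correct reformulation is in place.

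First I would observe that $H^K \leq K$ holds automatically under the hypothesis $H < K$. Indeed, $H \leq K$ and for every $x \in K$ we have $H^x \leq K^x = K$ (since $K$ is a subgroup and $x \in K$), so the join $H^K = \langle H^x \mid x \in K \rangle$ is contained in $K$. Thus the only thing to rule out is the equality $H^K = K$.

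Next I would assume, for contradiction, that $H^K = K$. Then in particular $K \leq H^K$, and the standing assumption that $HH^x = H^xH$ for all $x \in K$ is precisely the hypothesis of Proposition \ref{prop1.4}. Applying that proposition yields $K \leq H$, which directly contradicts the assumption $H < K$. Hence $H^K \neq K$, and combined with $H^K \leq K$ we conclude $H^K < K$.

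There is essentially no obstacle here: the proof is a one-line reduction to Proposition \ref{prop1.4}. The only thing worth being careful about is the trivial observation that $H^K$ is actually contained in $K$, which uses the fact that $K$ is normalized by its own elements; this is what justifies invoking the proposition with the roles of $H$ and $K$ unchanged rather than needing any modification.
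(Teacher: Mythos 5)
Your proof is correct and follows essentially the same route as the paper: both note that $H^K \leq K$ is automatic from $H \leq K$, and both rule out $H^K = K$ via Proposition~\ref{prop1.4} (the paper phrases it as a direct contrapositive, you as a contradiction, which is the same logical step). No gaps.
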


\begin{proof}
If $H < K$, then $H^K \leq K$.  Now $H < K$ implies that $K \nleq H$ so by the contrapositive of Proposition~\ref{prop1.4}, $K \nleq H^K$, and so in particular $H^K \neq K$.  So $H^K < K$.
\end{proof}

If $H \in \CD(G)$, then any $x \in G$ induces an automorphism of $G$, and so $H^x \in \CD(G)$, and so $\langle H , H^x \rangle = HH^x$, i.e. $HH^x = H^xH$.  Thus Proposition~\ref{prop1.4} and Corollary~\ref{cor1.1} apply for any subgroup $H \in \CD(G)$ and any subgroup $K$ of $G$.

Given $H \leq G$ and $X \subseteq G$, we use $core_X(H)$ to denote intersection of all of the conjugates of $H$ in $X$.  We note that if $H \in \CD(G)$ and $\emptyset \neq X \subseteq G$, then $H^X$ and $core_X(H)$ are both in $\CD(G)$.  

\begin{proposition}\label{prop1.5}
\begin{enumerate}
\item If $H \in \CD(G)$ with $H < G$, then $H^G < G$.
\item If $K \in \CD(G)$ with $Z(G) < K$, then $Z(G) < core_G(K)$.
\end{enumerate}
\end{proposition}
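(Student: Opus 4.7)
For (1), my plan is a direct appeal to Corollary~\ref{cor1.1} taken with $K = G$. The hypothesis $HH^x = H^xH$ for all $x \in G$ is precisely the observation recorded in the paragraph following Corollary~\ref{cor1.1}: since $H \in \CD(G)$, each conjugate $H^x$ also lies in $\CD(G)$, and so the join $\langle H, H^x\rangle$ equals the product $HH^x$. Combined with the assumption $H < G$, the corollary yields $H^G < G$ immediately.

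For (2), the strategy is to dualize via centralizers and reduce to (1). I would set $H = C_G(K)$; by the properties of $\CD(G)$ recalled at the start of the paper, $H \in \CD(G)$ and $C_G(H) = K$. First, I would observe that $H < G$: if instead $H = G$ then $K = C_G(H) = C_G(G) = Z(G)$, contradicting the hypothesis $Z(G) < K$. Applying part (1) to $H$ then produces $H^G < G$.

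The next step is to translate this back into a statement about $core_G(K)$ by means of the identity
\[
C_G(H^G) \;=\; C_G\bigl(\langle H^g : g \in G\rangle\bigr) \;=\; \bigcap_{g \in G} C_G(H)^g \;=\; \bigcap_{g \in G} K^g \;=\; core_G(K),
\]
which rests on the elementary facts that conjugation commutes with $C_G$ and that centralizing a join is the same as intersecting the centralizers of the pieces. Since $Z(G)$ lies in every conjugate $K^g$, we certainly have $Z(G) \leq core_G(K)$. To obtain strictness, suppose for contradiction that $core_G(K) = Z(G)$. Then $C_G(H^G) = Z(G)$. Since $H^G \in \CD(G)$, the double-centralizer identity $C_G(C_G(X)) = X$ on $\CD(G)$ lets me apply $C_G$ once more to conclude $H^G = C_G(Z(G)) = G$, contradicting $H^G < G$.

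The only real subtlety is ensuring that strict inclusions are preserved under the duality; this is handled uniformly by the involutive, order-reversing behaviour of $C_G$ on $\CD(G)$. Everything else is mechanical once part (1) is in hand.
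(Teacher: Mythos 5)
Your proposal is correct and follows essentially the same route as the paper: part (1) is Corollary~\ref{cor1.1} applied with $K = G$, and part (2) dualizes by applying the corollary (equivalently, part (1)) to $C_G(K)$ and translating back via $C_G\bigl({C_G(K)}^G\bigr) = core_G(K)$. The only difference is cosmetic: the paper states the strict inequality $Z(G) < core_G(K)$ without comment, while you correctly spell out the justification (the double-centralizer identity on $\mathcal{CD}(G)$, which applies since ${C_G(K)}^G \in \mathcal{CD}(G)$), making the strictness step explicit rather than implicit.
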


\begin{proof}
Part (1) follows directly from Corollary~\ref{cor1.1}.

If $Z(G) < K$, then $C_G(K) < G$, so by Corollary~\ref{cor1.1}, ${C_G(K)}^G < G$.  And so $Z(G) < C_G({C_G(K)}^G) = core_G(C_G(C_G(K)) = core_G(K)$.  
\end{proof}

Given $H,K \in \CD(G)$, we use the notation $H \prec K$ to mean that $H < K$ and there is no $R \in \CD(G)$ so that $H < R < K$.  If $M$ is the greatest element in $\CD(G)$ and $T$ is the least element in $\CD(G)$ (i.e. $T$ is the Chermak-Delgado subgroup of $G$), we say that $A \in \CD(G)$ is an \textbf{atom} if $T \prec A$, and we say that $B \in \CD(G)$ is a \textbf{coatom} if $B \prec M$.  Proposition 3 below appears in \cite{Bre12}.  The proof in \cite{Bre12} is different than our approach.

\begin{proposition}
Let $H,K \in \CD(G)$ with $H \prec K$.  Then $H \unlhd K$.  And so maximal chains in $\CD(G)$ form subnormal series of $G$.
\end{proposition}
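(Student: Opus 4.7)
The plan is to use Corollary~\ref{cor1.1} together with the observation that normal closures of Chermak--Delgado subgroups remain in the Chermak--Delgado lattice. Given $H \prec K$ in $\CD(G)$, I would form the normal closure $H^K$, show that it sits inside $\CD(G)$ strictly between $H$ and $K$ in the weak sense $H \leq H^K < K$, and then invoke the covering hypothesis to force $H^K = H$.

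More precisely, first I would note that since $H \in \CD(G)$, every conjugate $H^x$ also lies in $\CD(G)$, so $\langle H, H^x\rangle = H H^x$, and in particular $H H^x = H^x H$ for every $x \in K$. Thus the hypothesis of Corollary~\ref{cor1.1} is satisfied with this choice of $H$ and $K$, and since $H < K$ we conclude that $H^K < K$. Second, I would invoke the remark immediately preceding Proposition~\ref{prop1.5}, which states that $H^X \in \CD(G)$ whenever $H \in \CD(G)$ and $\emptyset \neq X \subseteq G$; applied with $X = K$ this gives $H^K \in \CD(G)$. Combining these facts yields $H \leq H^K < K$ with $H^K \in \CD(G)$.

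Now the covering relation $H \prec K$ forbids any element of $\CD(G)$ strictly between $H$ and $K$, so either $H^K = H$ or $H^K = K$; the latter is excluded by $H^K < K$. Therefore $H^K = H$, which is precisely the statement that $H$ is normalized by $K$, i.e.\ $H \unlhd K$. The second assertion follows immediately: if $T = H_0 \prec H_1 \prec \cdots \prec H_n = M$ is a maximal chain in $\CD(G)$, then the previous paragraph applied to each adjacent pair yields $H_i \unlhd H_{i+1}$, which is the definition of a subnormal series of $G$ from $T$ to $M$.

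There is no real obstacle here; the only point requiring care is recognizing that Corollary~\ref{cor1.1} is applicable (via the permutability of $H$ with its conjugates, a consequence of $H$ being in $\CD(G)$) and that $H^K$ itself belongs to $\CD(G)$, so the covering relation can be brought to bear on it.
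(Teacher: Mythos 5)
Your proof is correct and follows essentially the same route as the paper: apply Corollary~\ref{cor1.1} to get $H^K < K$, note $H^K \in \CD(G)$, and let the covering relation force $H^K = H$. The only hair's-breadth difference is in the final sentence: to conclude that a maximal chain is a subnormal series \emph{of $G$}, the paper also observes that the greatest element of $\CD(G)$ is normal (indeed characteristic) in $G$, a point worth adding since your chain only reaches $M$, not $G$.
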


\begin{proof}
$H < K$ and so by Corollary~\ref{cor1.1}, $H^K < K$, and we know that $H^K \in \mathcal{C}\mathcal{D}(G)$.  Since $H \prec K$, it must be that $H^K = H$, i.e. $H \unlhd K$.  

Now the greatest element in $\mathcal{C}\mathcal{D}(G)$ is normal in $G$ (in fact characteristic in $G$), and so maximal chains in $\CD(G)$ form subnormal series of $G$.
\end{proof}

\begin{corollary}\label{cor1.2}
Suppose $G \in \CD(G)$.  Then all of the atoms and all of the coatoms of $\CD(G)$ are normal in $G$.  
\end{corollary}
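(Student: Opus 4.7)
My plan is to handle coatoms and atoms separately, using the preceding proposition for the coatoms and then exploiting the self-duality of $\CD(G)$ via $H \mapsto C_G(H)$ to reduce the atom case to the coatom case.

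First, I would observe that the hypothesis $G \in \CD(G)$ means $G$ is itself the greatest element $M$ of $\CD(G)$. So any coatom $B$ satisfies $B \prec G$, and the previous proposition (applied with $H = B$, $K = G$) immediately gives $B \unlhd G$. This disposes of the coatoms with no further work.

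For the atoms, the plan is to use the duality $H \mapsto C_G(H)$ on $\CD(G)$. Since $\CD(G)$ is self-dual under centralization, and since $C_G$ is an order-reversing bijection on $\CD(G)$ with $C_G(C_G(H)) = H$, it carries covering relations to covering relations. Thus if $A$ is an atom, i.e. $T \prec A$, then $C_G(A) \prec C_G(T)$. Since $T$ is central in $G$ (it contains $Z(G)$ and is abelian, but more importantly $T = C_G(G) \cap \dots$; actually the simpler observation is that the greatest element of $\CD(G)$ is $C_G(T)$, and by hypothesis this equals $G$). So $C_G(A) \prec G$, making $C_G(A)$ a coatom. By the first part, $C_G(A) \unlhd G$, and hence $A = C_G(C_G(A))$ is the centralizer of a normal subgroup of $G$, hence is characteristic (and in particular normal) in $G$.

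I do not anticipate a serious obstacle; the main thing to be careful about is justifying that $C_G(T) = G$ under the hypothesis $G \in \CD(G)$, so that $C_G(A)$ really is a coatom rather than something strictly smaller than the top. This follows because the greatest element of $\CD(G)$ is the $C_G$-dual of the least element $T$, and $G \in \CD(G)$ forces the greatest element to be $G$ itself. With that in hand, the argument is essentially two lines built directly on top of the previous proposition and the involution $H \mapsto C_G(H)$.
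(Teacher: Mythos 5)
Your proof is correct and takes essentially the same route as the paper: coatoms are normal by the preceding proposition since $G \in \CD(G)$ makes $G$ the greatest element, and atoms are handled via the involution $H \mapsto C_G(H)$, which carries the atom $A$ to a coatom $C_G(A)$, whence $A = C_G(C_G(A)) \unlhd G$; you merely spell out the duality justification that the paper leaves implicit. One small caution: the centralizer of a normal subgroup is normal but need not be characteristic, so your parenthetical ``hence is characteristic'' is an overclaim --- harmless here, since normality is all the statement requires.
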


\begin{proof}
If $B \in \CD(G)$ is a coatom, then $B \prec G$, and so by Proposition 3, $B \unlhd G$.  If $A \in \CD(G)$ an atom, then $C_G(A) \in \CD(G)$ is a coatom, and so $C_G(A) \unlhd G$, and so $C_G(C_G(A)) = A \unlhd G$. 
\end{proof}

The topic of this paper are those groups whose Chermak-Delgado subgroup is the identity subgroup.  Consider direct products.  Given $H \leq G_1 \times \dots \times G_n = G$, one sees that $C_G(H) = C_G({\pi}_1(H) \times \dots \times {\pi}_n(H))$, where ${\pi}_i$ is the projection map into the $i$-th coordinate.  From here one sees that the Chermak-Delgado lattice of a direct product is the Cartesian product of the Chermak-Delgado lattices.  Proposition 4 appears in \cite{Bre12}:

\begin{proposition} $\CD(G_1 \times \dots \times G_n) = \CD(G_1) \times \dots \times \CD(G_n)$.
\end{proposition}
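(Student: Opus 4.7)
The plan is to induct on $n$ and reduce everything to the two-factor case, since $\CD(A \times B) = \CD(A) \times \CD(B)$ gives, by induction,
\[
\CD(G_1 \times \dots \times G_n) = \CD(G_1) \times \CD(G_2 \times \dots \times G_n) = \CD(G_1) \times \dots \times \CD(G_n).
\]
So it suffices to treat $G = G_1 \times G_2$, and here the heart of the matter is the centralizer identity the author has already isolated.

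For an arbitrary $H \leq G$, set $\widetilde{H} = \pi_1(H) \times \pi_2(H)$. Then $H \leq \widetilde{H}$, and $|H| \leq |\pi_1(H)||\pi_2(H)| = |\widetilde{H}|$ with equality precisely when $H = \widetilde{H}$. Combining this with $C_G(H) = C_G(\widetilde{H})$ yields
\[
m_G(H) = |H|\,|C_G(H)| \leq |\widetilde{H}|\,|C_G(\widetilde{H})| = m_G(\widetilde{H}),
\]
with equality iff $H$ is already a direct product of subgroups of $G_1$ and $G_2$. Hence every element of $\CD(G)$ necessarily has the form $H_1 \times H_2$ with $H_i \leq G_i$.

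It remains to identify which such product subgroups lie in $\CD(G)$. Because centralizers factor as $C_G(H_1 \times H_2) = C_{G_1}(H_1) \times C_{G_2}(H_2)$, the Chermak-Delgado measure multiplies:
\[
m_G(H_1 \times H_2) = m_{G_1}(H_1)\,m_{G_2}(H_2).
\]
Maximizing each factor independently gives $m^*(G) = m^*(G_1)\,m^*(G_2)$, and the maximum is attained by $H_1 \times H_2$ iff $H_i \in \CD(G_i)$ for each $i$. The bijection $(H_1,H_2) \leftrightarrow H_1 \times H_2$ respects meets (since $(H_1 \times H_2) \cap (K_1 \times K_2) = (H_1 \cap K_1) \times (H_2 \cap K_2)$) and joins (since the join in $\CD(G)$ is the product, and $(H_1 \times H_2)(K_1 \times K_2) = H_1K_1 \times H_2K_2$), so the equality of lattices follows.

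The only real technical point is the inequality $m_G(H) \leq m_G(\widetilde{H})$; everything afterward is bookkeeping built on the two ingredients the author supplies, namely the centralizer identity and the componentwise decomposition of centralizers of product subgroups. I do not expect a serious obstacle — the proof essentially writes itself once one uses $\widetilde{H}$ to force membership in $\CD(G)$ to coincide with being a product subgroup.
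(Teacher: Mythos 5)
Your proof is correct and takes essentially the same route as the paper's: the paper's preceding remark isolates the centralizer identity $C_G(H) = C_G(\pi_1(H) \times \dots \times \pi_n(H))$ and notes that the proposition follows "from here" (citing \cite{Bre12} for details), and your write-up supplies exactly those details --- the identity forces every measure-maximizing subgroup to equal $\pi_1(H) \times \dots \times \pi_n(H)$, after which the measure factors coordinatewise and the maximum splits. Your reduction to $n=2$ by induction is harmless bookkeeping on top of the same key idea.
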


\begin{corollary} $1 \in \CD(G_1 \times \dots \times G_n)$ if and only if $1 \in \CD(G_i)$ for each $i$.
\end{corollary}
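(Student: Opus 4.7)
The plan is to deduce the corollary almost immediately from the preceding Proposition 4, which identifies $\CD(G_1 \times \dots \times G_n)$ with the Cartesian product $\CD(G_1) \times \dots \times \CD(G_n)$ under the correspondence $H_1 \times \dots \times H_n \leftrightarrow (H_1,\dots,H_n)$. The only substantive observation needed is that the trivial subgroup of $G = G_1 \times \dots \times G_n$ splits as $1 = 1_{G_1} \times \dots \times 1_{G_n}$, and this is the \emph{unique} way to write $1$ as a product of subgroups $H_i \leq G_i$ (since $H_1 \times \dots \times H_n = 1$ forces each $H_i = 1$).

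For the forward direction, I would assume $1 \in \CD(G)$. Writing $1 = 1_{G_1} \times \dots \times 1_{G_n}$ and invoking Proposition 4, the tuple $(1_{G_1},\dots,1_{G_n})$ lies in $\CD(G_1)\times \dots \times \CD(G_n)$, which by definition of Cartesian product of the lattices means $1_{G_i} \in \CD(G_i)$ for every $i$. For the reverse direction, if $1_{G_i} \in \CD(G_i)$ for each $i$, then the tuple $(1_{G_1},\dots,1_{G_n})$ lies in the Cartesian product, and its preimage under the correspondence of Proposition 4 is exactly $1_{G_1} \times \dots \times 1_{G_n} = 1_G$, so $1 \in \CD(G)$.

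There is no real obstacle here; the only point requiring mild care is bookkeeping between the two senses of ``product'' (internal direct product of subgroups of $G$ versus Cartesian product of lattice elements), but this is handled cleanly by the bijective correspondence furnished by Proposition 4. If desired, one could also frame the corollary as the special case of the general lattice-theoretic fact that a Cartesian product of bounded lattices has least element equal to the tuple of least elements, applied to the canonical bounded sublattices $\CD(G_i)$.
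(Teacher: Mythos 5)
Your proof is correct and is exactly the argument the paper intends: the corollary is stated without proof precisely because, as you show, it is immediate from Proposition 4 once one notes that the trivial subgroup of $G_1 \times \dots \times G_n$ decomposes uniquely as $1_{G_1} \times \dots \times 1_{G_n}$. Your bookkeeping between internal direct products of subgroups and tuples in the Cartesian product of lattices is careful and matches the identification used throughout the paper.
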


\begin{proposition}
Suppose $1 \in \CD(G)$.  If $H,K \in \CD(G)$ so that $H \cap K = 1$ and $G=HK$, then $G = H \times K$ is a direct product.
\end{proposition}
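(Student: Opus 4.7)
The plan is to show that $H$ and $K$ are each normal in $G$, from which $[H,K] \le H \cap K = 1$ makes $G = HK$ into the internal direct product $H \times K$.

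I first collect basic data. Since $1 \in \CD(G)$, closure of $\CD(G)$ under centralizers gives $G = C_G(1) \in \CD(G)$, and comparing $m^*(G) = m_G(1) = |G|$ with $m_G(G) = |G||Z(G)|$ forces $Z(G) = 1$. From $HK = G$ with $H \cap K = 1$ we get $|G| = |H||K|$, and the Chermak--Delgado condition $m_G(H) = m_G(K) = |G|$ yields $|C_G(H)| = |K|$ and $|C_G(K)| = |H|$. To prove $H \unlhd G$ (the case $K \unlhd G$ being symmetric), I consider the normal closure $H^K = \langle H^k : k \in K \rangle$. Each $H^k \in \CD(G)$ (inner automorphisms preserve $\CD$), so $H^K \in \CD(G)$ as a join. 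The subgroup $H^K$ is normal in $G = HK$: invariance under $K$ is by definition, and invariance under $H$ follows from $H^{kh} = H^{k'}$ after rewriting $kh = h'k'$ via $G = HK$ (where $h' \in H$ normalizes $H$). Since $H \leq H^K$, we have $H^K K = G$; setting $L := H^K \cap K \in \CD(G)$, the product formula gives $|H^K| = |H||L|$, and since $H \cap L \leq H \cap K = 1$ the subgroup $HL$ has the same size as $H^K$, so $H^K = HL$. Thus $H \unlhd G$ exactly when $L = 1$.

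The main obstacle is showing $L = 1$. My plan is to apply Proposition~\ref{prop1.4} to the CD subgroup $H$ and to $L$: since $H$ satisfies $HH^x = H^xH$ for every $x \in G$, once $L \leq H^L$ is established, Proposition~\ref{prop1.4} delivers $L \leq H$, and combined with $L \leq K$ and $H \cap K = 1$ this forces $L = 1$. The crux is thus $L \leq H^L$, equivalently $H^L = H^K$. My intended route has two parts. First, form the descending chain $L = L_0 \supseteq L_1 \supseteq \cdots$ in $\CD(G)$ with $L_{n+1} := H^{L_n} \cap L_n$; at any stabilization $L_{n+1} = L_n$, the relation $L_n \leq H^{L_n}$ holds, so Proposition~\ref{prop1.4} applied to $H,L_n$ gives $L_n \leq H \cap K = 1$. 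Thus the chain terminates at $1$. Second, to rule out a proper descent at the first step and force $L_0 = L = 1$, I plan to adapt the Ore-style minimality argument used inside the proof of Proposition~\ref{prop1.4}: choose a minimal subset $S \subseteq K$ with $H^S = H^K$ and $1 \in S$, then combine the rewriting $x = hu$ from that proof with the decomposition $H^K = HL$ and the identity $H \cap K = 1$ to conclude $S \subseteq L$, giving $H^L \supseteq H^S = H^K$.

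Once $L = 1$ is in hand, the symmetric argument gives $K \unlhd G$, and hence $[H,K] \leq H \cap K = 1$, so elements of $H$ commute elementwise with elements of $K$. Together with $H \cap K = 1$ and $HK = G$, this produces the desired internal direct product $G = H \times K$.
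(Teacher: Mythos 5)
Your reduction is sound as far as it goes: $Z(G)=1$, $H^K\in\CD(G)$, $H^K\unlhd G$, $L:=H^K\cap K\in\CD(G)$, and the counting argument giving $H^K=HL$ with $H\unlhd G$ equivalent to $L=1$ are all correct, as is the final step from $H,K\unlhd G$ to $G=H\times K$. But the proof has a genuine gap at exactly the point you flag as the crux: the claim $L\le H^L$ (equivalently $H^L=H^K$, equivalently $S\subseteq L$ for your minimal set $S$) is never actually proved, and the proposed adaptation of the Ore-style argument is circular. In the proof of Proposition~\ref{prop1.4}, the rewriting $x_1=hu$ is extracted from the membership $x_1\in H^S$, and that membership is supplied by the hypothesis $K\le H^K$. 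In your setting that hypothesis is unavailable --- indeed it is what you are trying to refute --- and for an element $x_1\in S\subseteq K$, membership $x_1\in H^S=H^K$ is precisely the statement $x_1\in L$ that you want to conclude. The decomposition $H^K=HL$ only lets you rewrite elements $l\in L$ as $l=uh^{x_1}$, and since $l\neq x_1$ in general the computation does not collapse to $1=ux_1^{-1}h$; nothing forces the nonidentity elements of a minimal $S$ into $H^K$ at all. Note also that since the proposition ultimately yields $H^K=H$ and $L=1$, your intermediate claim $H^L=H^K$ is essentially as strong as the normality you are after, so it cannot be expected to come cheaply. (Your descending chain $L_{n+1}=H^{L_n}\cap L_n$ is correct but, as you acknowledge, only shows the chain terminates at $1$, which says nothing about $L_0$.)

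For contrast, the paper avoids normal closures of $H$ entirely and applies Proposition~\ref{prop1.4} to a different pair. Setting $A=core_K(Z(H))$ and $B=core_A(K)$, one checks $A$ and $B$ normalize each other and $A\cap B\le H\cap K=1$, so $A\le C_G(core_A(K))={C_G(K)}^A$ (using $C_G(X\cap Y)=C_G(X)C_G(Y)$ for members of $\CD(G)$); here Proposition~\ref{prop1.4} applies with the CD subgroup $C_G(K)$ in the role of $H$ and with $A$ in the role of $K$, giving $A\le C_G(K)$, hence $A\le Z(G)=1$. Then $core_G(Z(H))=core_K(Z(H))=1$ and Proposition~\ref{prop1.5}(2) force $Z(H)=1$, after which the measure equality $|G|=|H||C_G(H)|=|HC_G(H)|$ yields $G=HC_G(H)$ and so $H\unlhd G$ directly. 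The key trick you are missing is this passage through $Z(H)=1$: normality of $H$ is obtained from $G=HC_G(H)$, not from collapsing $H^K$. Your argument would need a new idea to close the $S\subseteq L$ step; as written it does not constitute a proof.
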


\begin{proof}
Suppose $H,K \in \CD(G)$ so that $H \cap K = 1$ and $G=HK$.  Let $A = core_K(Z(H))$ and let $B = core_A(K)$.  Note that $A$ and $B$ normalize one another, and $A \cap B = 1$, and thus $A = core_K(Z(H)) \leq C_G(core_A(K)) = C_G(K)^A$.  So by Proposition 1, $A \leq C_G(K)$.  And since $A \leq Z(H)$ and $G=HK$, we have $A \leq Z(G) = 1$.  And so $1 = core_K(Z(H)) = core_G(Z(H))$, and so by Proposition 2 part 2, $Z(H) = 1$.  Now $H \in \CD(G)$ and $m^*(G) = |G|$.  And so $|G| = |H||C_G(H)| = \dfrac{|H||C_G(H)|}{|Z(H)|} = |HC_G(H)|$, and so $G = HC_G(H)$, and so $H \unlhd G$.  By a similar argument, switching $H$ with $K$, one obtains that $K \unlhd G$.  And so $G = H \times K$ is a direct product.
\end{proof}

\begin{corollary}\label{prop1.17}
Suppose $1 \in \CD(G)$.  If $G = AB$ where $A$ and $B$ are abelian subgroups of $G$, then $G = 1$.
\end{corollary}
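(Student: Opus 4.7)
The plan is to exploit that $1\in\CD(G)$ forces $m^*(G)=|G|$, then bound $|A|$ and $|B|$ from below using the fact that an abelian subgroup is contained in its own centralizer. Specifically, since $A$ is abelian, $A\leq C_G(A)$, so $|A|^2\leq |A|\,|C_G(A)|=m_G(A)\leq m^*(G)=|G|$, and similarly $|B|^2\leq|G|$. On the other hand the hypothesis $G=AB$ gives $|G|\,|A\cap B|=|A||B|$, so $|G|\leq|A||B|$. Combining, $|G|^2\leq|A|^2|B|^2\leq|G|\cdot|G|=|G|^2$, which forces equality at every step.

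The equalities yield four useful facts at once: $A\cap B=1$; $|A|=|B|$ (hence $|G|=|A|^2=|B|^2$); $A=C_G(A)$ and $B=C_G(B)$; and $m_G(A)=m_G(B)=|G|=m^*(G)$, so that $A,B\in\CD(G)$. Now all the hypotheses of the preceding proposition (the direct-product criterion for groups with trivial Chermak-Delgado subgroup) are in place, and it produces $G=A\times B$.

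To finish, I invoke Proposition~4 (which says $\CD$ of a direct product is the Cartesian product of the $\CD$'s), so $1\in\CD(G)=\CD(A)\times\CD(B)$ gives $1\in\CD(A)$ and $1\in\CD(B)$. But $A$ is abelian, so $C_A(H)=A$ for every $H\leq A$, whence $m_A(H)=|H||A|$ is strictly maximized only at $H=A$; thus $\CD(A)=\{A\}$, and $1\in\CD(A)$ forces $A=1$. Symmetrically $B=1$, so $G=1$. I do not anticipate a genuine obstacle here: the whole argument is driven by the tight inequality $|A|^2|B|^2\leq|G|^2\leq|A|^2|B|^2$, and once that is squeezed the previous proposition does all the heavy lifting.
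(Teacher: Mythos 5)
Your proof is correct and takes essentially the same route as the paper's: the same tight counting argument (using $m^*(G)=|G|$ and $A\leq C_G(A)$, $B\leq C_G(B)$) squeezes the inequalities to force $A,B\in\CD(G)$ with $A\cap B=1$, after which Proposition 5 yields $G=A\times B$. The only cosmetic differences are that you extract $A\cap B=1$ from the equality case of $|G|\,|A\cap B|=|A||B|$ rather than from $A\cap B\leq Z(G)=1$ as the paper does, and you finish via Proposition 4 and $\CD(A)=\{A\}$ where the paper simply observes that the abelian group $G$ equals its trivial center.
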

\begin{proof}
Since $A$ and $B$ are abelian, $A \cap B \leq Z(G) = 1$.  So, $$|A||B| = |G| = m_G(G) \geq m_G(A) = |A||C_G(A)| \geq |A||A| \textrm{,}$$ so $|B| \geq |A|$.  Similarly, $$|A||B| = |G| = m_G(G) \geq m_G(B) = |B||C_G(B)| \geq |B||B| \textrm{,}$$ so $|A| \geq |B|$; and it follows that $|A|=|B|$, and we have equalities everywhere in the previous two strings of inequalities.  So $A,B \in \mathcal{C}\mathcal{D}(G)$ and so by Proposition 5, $G = A \times B$, and so $G$ is abelian, and so $1 = Z(G) = G$.
\end{proof}

\begin{theorem}
Suppose $1 \in \CD(G)$, and suppose $\CD(G) \cong \mathcal{L}_1 \times \mathcal{L}_2$ for lattices $\mathcal{L}_1$ and $\mathcal{L}_2$.  Then $G = H \times K$ for subgroups $H$ and $K$ with $\CD(H) \cong \mathcal{L}_1$ and $\CD(K) \cong \mathcal{L}_2$.
\end{theorem}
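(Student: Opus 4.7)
The plan is to extract, from the lattice isomorphism $\varphi \colon \CD(G) \to \mathcal{L}_1 \times \mathcal{L}_2$, a pair of subgroups $H, K \in \CD(G)$ that play the role of the two ``axes'' of the Cartesian product, and then feed them into Proposition~5 to split $G$.

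First I would check that $\CD(G)$ really has $1$ as its least and $G$ as its greatest element. The hypothesis $1 \in \CD(G)$ forces $Z(G) \le T = 1$, so $m_G(G) = |G| \cdot |Z(G)| = |G| = m^*(G)$, placing $G$ at the top of $\CD(G)$. Any lattice isomorphism of finite lattices preserves extremes, so $\varphi(1) = (\bot_1,\bot_2)$ and $\varphi(G) = (\top_1,\top_2)$, where $\bot_i, \top_i$ denote the least and greatest elements of $\mathcal{L}_i$.

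Next I would set $H := \varphi^{-1}(\top_1, \bot_2)$ and $K := \varphi^{-1}(\bot_1, \top_2)$. Because meets in $\CD(G)$ are intersections and joins are products of subgroups, the identities $(\top_1,\bot_2) \wedge (\bot_1,\top_2) = (\bot_1,\bot_2)$ and $(\top_1,\bot_2) \vee (\bot_1,\top_2) = (\top_1,\top_2)$ in $\mathcal{L}_1 \times \mathcal{L}_2$ translate under $\varphi^{-1}$ to $H \cap K = 1$ and $HK = G$. Proposition~5 then delivers the direct product decomposition $G = H \times K$.

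To finish, I would combine Proposition~4 with the principal-ideal description of the factors. On the group side, $\CD(G) = \CD(H) \times \CD(K)$, and the interval $[1, H]$ in $\CD(G)$ equals $\CD(H) \times \{1\} \cong \CD(H)$; on the lattice side, the same interval corresponds via $\varphi$ to $\mathcal{L}_1 \times \{\bot_2\} \cong \mathcal{L}_1$. So $\CD(H) \cong \mathcal{L}_1$, and the symmetric argument yields $\CD(K) \cong \mathcal{L}_2$. The only substantive step is the invocation of Proposition~5; everything else is routine lattice bookkeeping. The point that is easiest to overlook is that the argument genuinely needs $G$ itself (not merely the greatest element of $\CD(G)$) to correspond to $(\top_1,\top_2)$, which is why the opening observation $Z(G) = 1$ is used.
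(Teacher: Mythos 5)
Your proof is correct and follows essentially the same route as the paper: you pull back $(\top_1,\bot_2)$ and $(\bot_1,\top_2)$ to subgroups $H,K \in \CD(G)$ with $H \cap K = 1$ and $HK = G$, invoke Proposition~5 to split $G = H \times K$, and then use Proposition~4 to identify $\CD(H) \cong \mathcal{L}_1$ and $\CD(K) \cong \mathcal{L}_2$. Your added details — verifying $G \in \CD(G)$ via $Z(G)=1$ and justifying the final isomorphisms through the intervals $[1,H]$ and $[1,K]$ — merely make explicit what the paper's proof leaves tacit.
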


\begin{proof}
Suppose $1 \in \CD(G)$ and suppose that $\CD(G)$ is lattice isomorphic to a Cartesian product $\mathcal{L}_1 \times \mathcal{L}_2$ of lattices $\mathcal{L}_1$ and $\mathcal{L}_2$.  Let $M_1$, $B_1$ be the greatest and least elements, respectively, of $\mathcal{L}_1$ and let $M_2$, $B_2$ be the greatest and least elements, respectively, of $\mathcal{L}_2$.  Note that $(B_1,M_2) \vee (M_1,B_2) = (M_1,M_2)$ is the greatest element of $\mathcal{L}_1 \times \mathcal{L}_2$ and $(B_1,M_2) \wedge (M_1,B_2) = (B_1,B_2)$ is the least element of $\mathcal{L}_1 \times \mathcal{L}_2$.  And since $\CD(G)$ is lattice isomorphic to $\mathcal{L}_1 \times \mathcal{L}_2$, there is $H,K \in \CD(G)$ corresponding to $(M_1,B_2),(B_1,M_2)$, respectively, in $\mathcal{L}_1 \times \mathcal{L}_2$ so that $H \cap K = 1$ and $HK=G$.  By Proposition 5, $G = H \times K$ is a direct product.  By Proposition 4, $\CD(G) = \CD(H) \times \CD(K)$.  And so we have a lattice isomorphism between $\CD(H) \times \CD(K)$ and $\mathcal{L}_1 \times \mathcal{L}_2$ where $H$ corresponds to $(M_1,B_2)$ and $K$ corresponds to $(B_1,M_2)$.  Thus $\CD(H) \cong \mathcal{L}_1$ and $\CD(K) \cong \mathcal{L}_2$.
\end{proof}

By induction, this theorem extends to a direct product/Cartesian product of $n$ number of groups/CD lattices.

A group $G$ is said to be \textbf{indecomposable} if $G$ cannot be written as an (internal) direct product $H \times K$ with $H \neq 1$ and $K \neq 1$.  A lattice $\mathcal{L}$ is said to be \textbf{indecomposable} if $\mathcal{L}$ is not lattice isomorphic to a Cartesian product $\mathcal{L}_1 \times \mathcal{L}_2$ of lattices with $\mathcal{L}_1$ and $\mathcal{L}_2$ both nontrivial (a trivial lattice is a lattice consisting of a single point).

\begin{corollary}
Suppose $1 \in \CD(G)$.  Then $G$ is indecomposable if and only if $\CD(G)$ is indecomposable.
\end{corollary}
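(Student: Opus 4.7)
The plan is to prove both implications by contrapositive, using the Theorem just proved for the forward direction and Proposition~4 (together with its corollary) for the backward direction.

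For the forward direction, suppose $\CD(G)$ is decomposable, so $\CD(G)\cong \mathcal{L}_1\times\mathcal{L}_2$ with both $\mathcal{L}_i$ nontrivial. The Theorem immediately yields $G=H\times K$ with $\CD(H)\cong\mathcal{L}_1$ and $\CD(K)\cong\mathcal{L}_2$. Since a trivial group has a trivial (one-point) CD lattice, the nontriviality of each $\mathcal{L}_i$ forces $H\neq 1$ and $K\neq 1$, so $G$ is decomposable.

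For the backward direction, suppose $G$ is decomposable, say $G=H\times K$ with $H,K\neq 1$. By Proposition~4, $\CD(G)=\CD(H)\times\CD(K)$, and by the corollary following Proposition~4, the hypothesis $1\in\CD(G)$ gives $1\in\CD(H)$ and $1\in\CD(K)$. To conclude that $\CD(G)$ is decomposable it remains to show that $\CD(H)$ and $\CD(K)$ are each nontrivial as lattices, i.e.\ each contains at least two elements.

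The main (and only) obstacle is this last step. I would handle it by observing that the condition $1\in\CD(H)$ forces $m^*(H)=m_H(1)=|H|$. On the other hand $m_H(H)=|H|\cdot|Z(H)|\geq|H|$, so necessarily $Z(H)=1$ and $m_H(H)=|H|=m^*(H)$, which means $H\in\CD(H)$ as well. Since $H\neq 1$, the subgroups $1$ and $H$ are distinct elements of $\CD(H)$, so $\CD(H)$ is nontrivial; the same argument applies to $\CD(K)$. Hence $\CD(G)\cong\CD(H)\times\CD(K)$ is a Cartesian product of two nontrivial lattices, so $\CD(G)$ is decomposable, completing the contrapositive.
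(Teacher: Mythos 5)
Your proof is correct and follows essentially the same route as the paper: Theorem~1 for the direction from indecomposability of $G$ to indecomposability of $\CD(G)$, and Proposition~4 together with Corollary~3 for the converse, merely phrased in contrapositive form. Your explicit measure argument showing $1 \in \CD(H)$ forces $Z(H)=1$ and $H \in \CD(H)$ (so that $H \neq 1$ makes $\CD(H)$ nontrivial) nicely fills in a step the paper leaves implicit in its inference from $\CD(H)=\{1\}$ to $H=1$.
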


\begin{proof}
Suppose $1 \in \CD(G)$ and suppose $\CD(G)$ is indecomposable.  If $G = H \times K$, then by Proposition 4, $\CD(G) = \CD(H) \times \CD(K)$.  Since $\CD(G)$ is indecomposable, at least one of $\CD(H)$ or $\CD(K)$ is trivial, say without loss of generality that $\CD(H)$ is trivial.  By Corollary 3, since $1 \in \CD(G)$, $1 \in \CD(H)$, and so $\CD(H) = \{1\}$.  And so $H = 1$, and thus $G$ is indecomposable.

Suppose $1 \in \CD(G)$ and suppose $G$ is indecomposable.  Suppose that $\CD(G)$ is lattice isomorphic to a Cartesian product $\mathcal{L}_1 \times \mathcal{L}_2$ of lattices $\mathcal{L}_1$ and $\mathcal{L}_2$.  By Theorem 1, $G = H \times K$ for subgroups $H$ and $K$ with $\CD(H) \cong \mathcal{L}_1$ and $\CD(K) \cong \mathcal{L}_2$.  And since $G$ is indecomposable, at least one of $H$ or $K$ is trivial, and so at least one of $\mathcal{L}_1$ or $\mathcal{L}_2$ is trivial, and so $\CD(G)$ is indecomposable.
\end{proof}

In \cite{McC17}, the author studied \textbf{CD-simple} groups, which are groups, $G$, having the property that $\CD(G) = \{1, G \}$.  We define a group, $G$, to be \textbf{CD-minimal} if $1 \in \CD(G)$ and $G$ is indecomposable.  So every CD-simple group is CD-minimal, but not vice versa.

\begin{proposition}
Suppose $G$ is CD-minimal but not CD-simple.  If $A \in \CD(G)$ is an atom and if $B \in \CD(G)$ is a coatom, then $A \leq B$.
\end{proposition}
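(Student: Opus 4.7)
The plan is to argue by contradiction, assuming $A \not\leq B$, and to use the extremal positions of $A$ and $B$ in $\CD(G)$ to force $A$ and $B$ into the setup of Proposition 5.

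First I would record the ambient facts. Since $1 \in \CD(G)$, the maximum measure is $m^*(G) = m_G(1) = |G|$, which is achieved by $G$ itself (as $m_G(G) = |G|$); hence $G \in \CD(G)$ and $G$ is the greatest element of the lattice. So coatoms are precisely the $B \in \CD(G)$ with $B \prec G$, and atoms are the $A \in \CD(G)$ with $1 \prec A$. Also recall that $A \cap B = C_G(C_G(A)C_G(B)) \in \CD(G)$ and $A \vee B = AB \in \CD(G)$ for any $A,B \in \CD(G)$.

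Now suppose for contradiction that $A \not\leq B$. Then $A \cap B$ is a proper subgroup of $A$ lying in $\CD(G)$, and since $A$ is an atom we must have $A \cap B = 1$. Dually, $AB = A \vee B$ strictly contains $B$ and lies in $\CD(G)$, and since $B$ is a coatom we must have $AB = G$. Thus $A, B \in \CD(G)$ satisfy the hypotheses of Proposition 5, so $G = A \times B$ is an internal direct product.

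Finally I invoke indecomposability: since $G$ is CD-minimal, either $A = 1$ or $B = 1$. The case $A = 1$ contradicts $A$ being an atom (atoms are strictly above the least element $1$). The case $B = 1$ would mean the coatom relation $B \prec G$ reads $1 \prec G$, so $\CD(G) = \{1, G\}$, contradicting the hypothesis that $G$ is not CD-simple. Either way we reach a contradiction, so $A \leq B$. There is no real obstacle here; the argument is a direct bookkeeping exercise once one notices that an atom meeting a coatom incomparably produces a complementary pair in $\CD(G)$, which Proposition 5 upgrades to a direct factorization incompatible with indecomposability.
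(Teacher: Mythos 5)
Your proof is correct and follows essentially the same route as the paper: incomparability of $A$ and $B$ forces $A \cap B = 1$ and $AB = G$, producing the direct product $G = A \times B$ and contradicting indecomposability, with $A \neq 1$ from the atom property and $B \neq 1$ from $G$ not being CD-simple. The only cosmetic difference is that you obtain directness by invoking Proposition 5, whereas the paper cites Corollary 2 (atoms and coatoms are normal in $G$); your write-up actually makes explicit the step $AB = G$ via the coatom property, which the paper leaves implicit.
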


\begin{proof}
By Corollary 2, both $1 \neq A$ and $1 \neq B$ are normal in $G$.  If $A \cap B = 1$, then $G = A \times B$ is a direct product, contrary to the assumption that $G$ is indecomposable.  So $A \cap B \neq 1$, and since $A$ is an atom in $\CD(G)$, $A \leq B$.
\end{proof}

\begin{proposition} Suppose $G$ is CD-minimal but not CD-simple.  If $A \in \CD(G)$ is an atom, then $A$ is abelian, $A \unlhd G$, and $|A|$ contains primes $p \neq q$.
\end{proposition}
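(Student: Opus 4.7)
The plan is to establish the three assertions in order.

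Normality. Since $1\in\CD(G)$ is the Chermak--Delgado subgroup and $Z(G)$ is contained in it, $Z(G)=1$, so $m_G(G)=|G||Z(G)|=|G|=m^*(G)$ and hence $G\in\CD(G)$. Corollary~\ref{cor1.2} then gives $A\unlhd G$ immediately.

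Abelianness. Since $A$ is an atom, its dual $C_G(A)$ is a coatom of $\CD(G)$ (the involution $X\mapsto C_G(X)$ exchanges atoms with coatoms), and Proposition~6 forces $A\le C_G(A)$, so $A$ is abelian. Alternatively, $Z(A)=A\cap C_G(A)$ lies in $\CD(G)$ as a meet, so the atom property forces $Z(A)\in\{1,A\}$, and $Z(A)=1$ combined with $|A||C_G(A)|=|G|$ and Proposition~5 would yield $G=A\times C_G(A)$, whose indecomposability forces either $A=1$ (impossible) or $C_G(A)=1$, giving $A=G$ and hence the CD-simple situation $\CD(G)=\{1,G\}$ excluded by hypothesis.

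Two primes. Suppose for contradiction $|A|=p^n$ is a prime power. Then $G/C_G(A)\hookrightarrow Aut(A)$ has order $p^n$, so is a $p$-group, and consequently every Sylow $q$-subgroup of $G$ with $q\ne p$ lies in $B:=C_G(A)$. Since $Z(G)=1$ forbids $G$ from being a $p$-group, some such prime $q$ divides $|G|$. If $A$ were a full Sylow $p$-subgroup of $G$, Schur--Zassenhaus would provide a Hall $p'$-complement $K$; then $K\le B$ gives $[A,K]=1$, so $G=A\times K$, contradicting indecomposability. Hence $A$ is properly contained in a Sylow $p$-subgroup of $G$, and the remaining task is to rule out this case as well. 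Here I would exploit $Z(B)=B\cap C_G(B)=B\cap A=A$ (so the center of $B$ is a $p$-group even though $B$ carries the full $p'$-part of $|G|$), together with Proposition~5, to construct a proper nontrivial normal subgroup $N$ of $G$ that lies in $\CD(G)$ and whose centralizer provides a direct complement, once more contradicting indecomposability.

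The main obstacle is this final subcase of the two-primes argument: producing an explicit direct factor when $A$ is not a full Sylow. A natural candidate is $N=A\cdot O_{p'}(G)=A\times O_{p'}(G)$ (normal in $G$ with trivial intersection of the two factors, which commute since $O_{p'}(G)\le B$), and when $O_{p'}(G)=1$ one instead looks to a characteristic subgroup of $B$ that is normal in $G$. The work lies in verifying that this candidate attains CD-measure $|G|$ and admits a normal complement in $G$; everything else in the proof is short and structural.
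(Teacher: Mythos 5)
Your first two parts are correct and coincide with the paper's own proof: normality of $A$ comes from Corollary~\ref{cor1.2} (using $Z(G)=1$, hence $G\in\CD(G)$), and abelianness from the duality $A \mapsto C_G(A)$ sending atoms to coatoms together with Proposition~6. The gap is in the two-primes part, and you have named it yourself: after correctly disposing of the case where $A$ is a full Sylow $p$-subgroup via Schur--Zassenhaus, the case where $A$ is properly contained in a Sylow $p$-subgroup is left open. Your proposed candidate $N = A\cdot O_{p'}(G)$ does not obviously work --- there is no reason that $m_G(N)=|G|$, i.e.\ that $N\in\CD(G)$, since $C_G(N)=B\cap C_G(O_{p'}(G))$ has no controlled order --- and when $O_{p'}(G)=1$ you have no candidate at all. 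So the proof is incomplete precisely where the content of the statement lies. A further structural remark: your whole strategy routes the two-primes claim through indecomposability (manufacturing a direct-product contradiction), which is why you are forced into this case analysis in the first place.

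The paper closes this in three lines with a different device that needs neither a case split nor indecomposability. Take a minimal normal subgroup $N$ of $G$ with $N\leq A$; since $A$ is abelian, $N$ is elementary abelian of order $p^k$. Then $G/C_G(N)$ acts faithfully and irreducibly on $N$, and this quotient cannot be a $p$-group: a nontrivial $p$-group acting on the nontrivial $p$-group $N$ has nontrivial fixed points, which form a $G$-invariant subgroup, so irreducibility would force the action to be trivial and then faithfulness gives $C_G(N)=G$, i.e.\ $N\leq Z(G)=1$, a contradiction. Hence some prime $q\neq p$ divides $|G/C_G(N)|$, which divides $|G/C_G(A)|$; and $|G/C_G(A)|=|A|$ because $A\in\CD(G)$ and $m^*(G)=|G|$. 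Thus $q$ and $p$ both divide $|A|$. If you want to salvage your draft, replace the entire two-primes section with this minimal-normal-subgroup argument; your observation $Z(B)=B\cap C_G(C_G(A))=B\cap A=A$ is true but leads nowhere that anyone has made work.
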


\begin{proof}
Let $1 \neq A \in \CD(G)$ an atom.  Then $C_G(A) \in \CD(G)$ is a coatom, and by Proposition 6, $A \leq C_G(A)$, i.e. $A$ is abelian.  By Corollary 2, $A \unlhd G$.  Let $N \leq A$ be a minimal normal subgroup of $G$.  So $|N| = p^k$ for some prime $p$ and some $k$.  Now, as $N$ is minimal normal, $G/C_G(N)$ acts faithfully and irreducibly on $N$, and so $|G/C_G(N)|$ is not a power of $p$.  And so a prime $q\neq p$ divides $|G/C_G(N)|$.  Now $|G/C_G(N)|$ divides $|G/C_G(A)|$, and since $A \in \CD(G)$, $|A| = |G/C_G(A)|$.  Thus $|A|$ contains primes $p \neq q$.
\end{proof}

\begin{lemma} If $H,K \in \CD(G)$ and $K \leq H$, then $|H:K| = |C_G(K) : C_G(H)|$.
\end{lemma}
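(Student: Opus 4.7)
The plan is to prove this directly from the definition of $\CD(G)$, since both $H$ and $K$ belong to $\CD(G)$ and therefore share the common Chermak-Delgado measure $m^*(G)$.

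First I would record the key identity: by definition, $m_G(H) = |H||C_G(H)| = m^*(G) = |K||C_G(K)| = m_G(K)$. Rearranging gives $|H|/|K| = |C_G(K)|/|C_G(H)|$. Since $K \leq H$ we have $C_G(H) \leq C_G(K)$, so the right-hand side is exactly the index $|C_G(K) : C_G(H)|$, while the left-hand side is $|H:K|$. This yields the claimed equality.

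There is no real obstacle here; the lemma is essentially an unpacking of the definition of $\CD(G)$ together with the standard fact that $K \leq H$ implies $C_G(H) \leq C_G(K)$. The only thing worth flagging explicitly in the write-up is that both indices are finite (which is automatic since $G$ is finite) and that $|H|, |K|, |C_G(H)|, |C_G(K)|$ all divide $|G|$, so the division is legitimate. Thus the proof is a one-line calculation preceded by citing that $H, K \in \CD(G)$ share the maximal measure.
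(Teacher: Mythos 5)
Your proof is correct and matches the paper's own argument exactly: both rest on the equality $|H||C_G(H)| = |K||C_G(K)| = m^*(G)$ and the containment $C_G(H) \leq C_G(K)$ coming from $K \leq H$. Your additional remarks about finiteness and divisibility are harmless but unnecessary; the paper states the computation in one line.
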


\begin{proof}
As $H,K \in \CD(G)$, $|H||C_G(H)| = |K||C_G(K)|$, and so $|H:K| = |C_G(K) : C_G(H)|$.
\end{proof}

Corollary 6 follows immediately:

\begin{corollary}  Suppose $G$ is CD-minimal but not CD-simple.  If $B \in \CD(G)$ is a coatom, then $C_G(B) \leq B$, $B \unlhd G$, and $|G:B|$ contains primes $p \neq q$
\end{corollary}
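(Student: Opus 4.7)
The plan is to exploit the self-duality of $\CD(G)$ under the centralizer map $H \mapsto C_G(H)$, which converts the statement about a coatom into the already-proved statement about an atom (Proposition 8).

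First, I would observe that the hypothesis $1 \in \CD(G)$ forces $Z(G) \leq T = 1$, so $Z(G) = 1$, and hence $m_G(G) = |G||Z(G)| = |G| = m_G(1) = m^*(G)$, so $G \in \CD(G)$. In particular, $G$ is the greatest element of $\CD(G)$, so a coatom $B$ satisfies $B \prec G$. By Corollary~\ref{cor1.2}, $B \unlhd G$, which takes care of the normality conclusion.

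Next, using that the centralizer map is an order-reversing bijection on $\CD(G)$ with $C_G(Z(G)) = G$ and $C_G(G) = Z(G) = 1$, the image $A := C_G(B)$ is an atom: indeed $A \in \CD(G)$, $C_G(A) = C_G(C_G(B)) = B$, and $B \prec G$ corresponds to $1 \prec A$. Since $B \neq G$, we have $A \neq 1$. Applying Proposition 8 to $A$, the atom $A = C_G(B)$ is abelian. Being abelian, $C_G(B) \leq C_G(C_G(B)) = B$, yielding the first conclusion.

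Finally, for the prime-divisor claim, I would invoke Lemma~1 with $K = B$ and $H = G$: since both lie in $\CD(G)$ and $B \leq G$,
\[
|G:B| = |C_G(B) : C_G(G)| = |C_G(B) : Z(G)| = |C_G(B)|.
\]
By Proposition 8, $|C_G(B)| = |A|$ contains two distinct primes $p \neq q$, so $|G:B|$ does as well. There is really no obstacle here — the whole argument is driven by the duality $B \leftrightarrow C_G(B)$ between coatoms and atoms, plus the index formula of Lemma~1, which is why the author can simply say the corollary follows immediately.
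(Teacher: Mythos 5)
Your proof is correct and is exactly the argument the paper leaves implicit when it says the corollary ``follows immediately'': dualize via the order-reversing bijection $H \mapsto C_G(H)$ on $\CD(G)$ to turn the coatom $B$ into the atom $C_G(B)$, apply the atom result to get abelianness (hence $C_G(B) \leq B$) and the two distinct primes, and use Lemma 1 (with $Z(G)=1$) for the index claim, with normality of $B$ coming from Corollary 2. The only slip is a citation label: the atom result you invoke is Proposition 7 in the paper's numbering (Proposition 8 is the statement about triangular matrix groups), so replace that reference accordingly.
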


\begin{corollary}
Suppose $1 \in \CD(G)$.  If $1 \neq H \in \CD(G)$, then $H$ is not a $p$-group for any prime $p$.
\end{corollary}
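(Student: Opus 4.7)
The plan is to argue by contradiction: suppose $1 \neq H \in \CD(G)$ is a $p$-group. Since $\CD(G)$ is a finite lattice with least element $1$, some atom $A$ of $\CD(G)$ satisfies $A \leq H$, and $A$ is again a nontrivial $p$-group, so it suffices to derive a contradiction from $A$.

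Next I would verify that $G \in \CD(G)$ so that Corollary~\ref{cor1.2} can be invoked. Recall from the introduction that the Chermak-Delgado subgroup $T$ contains $Z(G)$; since $T = 1$, we have $Z(G) = 1$, and hence $m_G(G) = |G||Z(G)| = |G| = m^*(G)$, giving $G \in \CD(G)$. Corollary~\ref{cor1.2} then yields $A \unlhd G$, so I may choose a minimal normal subgroup $N$ of $G$ with $N \leq A$; such an $N$ is elementary abelian of order $p^k$ for some $k \geq 1$.

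The heart of the argument is that $G/C_G(N)$ cannot be a $p$-group. Indeed, $G/C_G(N)$ acts faithfully on $N$, and a $p$-group acting on a nontrivial $p$-group must have nontrivial fixed points by the standard orbit-counting argument modulo $p$; but the set of $G$-fixed points in $N$ is $C_N(G) = N \cap Z(G) = 1$, a contradiction. Therefore some prime $q \neq p$ divides $|G : C_G(N)|$.

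Finally, Lemma~1 applied to $1 \leq A$ in $\CD(G)$ gives $|A| = |G : C_G(A)|$, and since $C_G(A) \leq C_G(N)$, the index $|G : C_G(N)|$ divides $|A|$. Thus $q$ divides $|A|$, contradicting $A$ being a $p$-group. The step requiring the most care is showing $G \in \CD(G)$, but this follows immediately from $Z(G) \leq T$; the rest is routine orbit counting and index arithmetic. An alternative route would be to decompose $G$ into a direct product of CD-minimal factors via Theorem~1 and then cite Proposition~8 (for non-CD-simple factors) together with the fact that a CD-simple group has trivial center (for CD-simple factors), but the direct approach above avoids that detour.
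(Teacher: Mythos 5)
Your proof is correct, and it takes a genuinely different route from the paper's. The paper reduces to the CD-minimal case: it (implicitly) decomposes $G$ into indecomposable direct factors, uses Proposition 4 and Corollary 3 to transfer the claim to each factor, and then splits into cases --- CD-simple groups are handled by $Z(G)=1$, and the rest by citing Proposition 7, whose proof rests on the faithful \emph{irreducible} action of $G/C_G(N)$ on a minimal normal subgroup $N$. You bypass both the decomposition and the case split: from $1 \in \CD(G)$ you get $Z(G)=1$ and $m^*(G) = m_G(1) = |G| = m_G(G)$, so $G \in \CD(G)$ and Corollary 2 applies directly in $G$ to give $A \unlhd G$ for an atom $A \leq H$; you then rerun the core of Proposition 7's argument with irreducibility replaced by elementary orbit counting (a $p$-group acting on a nontrivial $p$-group has nontrivial fixed points, while the $G$-fixed points of $N$ are $N \cap Z(G) = 1$), and close with Lemma 1 applied to the pair $1 \leq A$, which yields $|A| = |G : C_G(A)|$, divisible by $|G : C_G(N)|$ and hence by $q \neq p$. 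Your fixed-point argument even absorbs the degenerate case $C_G(N)=G$ uniformly, which the paper's phrasing handles via $Z(G)=1$. The trade-off: the paper's proof is two lines because the structure theory of CD-minimal groups (Proposition 7) is developed anyway and the reduction showcases Corollary 3, whereas your argument is self-contained, avoids the Krull--Remak--Schmidt-style factorization hiding in ``reduce to CD-minimal,'' and substitutes routine counting for the representation-theoretic fact about faithful irreducible actions. One small slip in your closing aside: the alternative you sketch there is essentially the paper's actual proof, but the atom result you would cite is Proposition 7, not Proposition 8 (which concerns triangular matrix groups), and the decomposition step in the paper runs through Proposition 4 and Corollary 3 rather than Theorem 1.
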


\begin{proof}
If we prove Corollary 7 for any CD-minimal group, then by Corollary 3, the result will follow for any arbitrary group $X$ with $1 \in \CD(X)$.  So suppose $G$ is a CD-minimal group.  If $G$ is CD-simple, then $G$ is not a $p$-group for any prime $p$ since $Z(G) = 1$.  If $G$ is CD-minimal and not CD-simple, and $1 \neq H \in \CD(G)$, then $H$ contains an atom $A \in \CD(G)$, and so by Proposition 7, $H$ is not a $p$-group for any prime $p$.
\end{proof}

\begin{corollary} Suppose $1 \in \CD(G)$.  If $G \neq H \in \CD(G)$, then $|G:H|$ contains at least two different primes, and it follows that $H$ is not a maximal subgroup of $G$.
\end{corollary}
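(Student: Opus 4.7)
My plan mirrors the reduction used in the proof of Corollary 7 and then exploits the normality of coatoms established in Corollary 6. First I would decompose $G$ into a direct product $G_1 \times \cdots \times G_n$ of indecomposable factors. By Corollary 3, each $G_i$ satisfies $1 \in \CD(G_i)$, so each $G_i$ is CD-minimal. By Proposition 4, any $H \in \CD(G)$ has the product form $H_1 \times \cdots \times H_n$ with $H_i \in \CD(G_i)$, and $H \neq G$ forces some $H_i \neq G_i$. Once the two assertions are established in the CD-minimal case, the prime divisors of $|G_i : H_i|$ will divide $|G:H|$, and any proper intermediate subgroup $H_i < M_i < G_i$ will lift to a proper intermediate subgroup $H < H_1 \times \cdots \times M_i \times \cdots \times H_n < G$, settling both claims in general.

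In the CD-minimal case, I split into two subcases. If $G$ is CD-simple then the only candidate is $H = 1$. Since $Z(G)$ sits inside the (trivial) Chermak-Delgado subgroup, $Z(G) = 1$, which rules out $G$ being a nontrivial $p$-group and therefore forces $|G|$ to have at least two distinct prime divisors; Sylow's theorem then produces a proper nontrivial subgroup witnessing that $1$ is not maximal. If $G$ is CD-minimal but not CD-simple, I pick a coatom $B$ of $\CD(G)$ with $H \leq B$. If $H < B$, then Corollary 6 already supplies two distinct primes dividing $|G:B|$ (hence $|G:H|$), and the explicit chain $H < B < G$ shows $H$ is not maximal.

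The essential case, and the main obstacle, is when $H = B$ is itself a coatom. Here Corollary 6 directly yields the two-primes statement, but it does not automatically give ``not maximal'': a subgroup of composite index can a priori be maximal. The extra input I would use is that coatoms are normal in $G$ (Corollary 6), so that I can pass to the quotient $G/H$. Since $|G/H|$ has two distinct prime divisors, I pick any prime $p$ dividing $|G/H|$, invoke Cauchy's theorem to find a subgroup of order $p$ in $G/H$, and pull it back to obtain a subgroup $K$ with $H < K < G$; properness of $K$ in $G$ follows because $|G/H|$ has a prime factor distinct from $p$, ensuring $|G/H| > p$. Thus normality of coatoms is the structural ingredient that converts ``two primes in the index'' into the stronger statement that $H$ fails to be maximal.
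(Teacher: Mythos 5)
Your proof is correct, and its first half follows the paper's route: the paper likewise reduces to the CD-minimal case (it compresses your explicit decomposition into indecomposable factors, Proposition 4, and the lifting step into a one-line appeal to Corollary 3), handles the CD-simple case via $Z(G)=1$, and gets the two-primes claim from $H \leq B$ with $B$ a coatom and Corollary 6. Where you genuinely differ is the finishing move for non-maximality. The paper argues by contradiction without passing to a quotient: assuming $H$ maximal, it takes a Sylow $p$-subgroup $P$ for a prime $p$ dividing $|G:H|$, notes $H < HP$, concludes $HP = G$ from maximality, and contradicts the fact that $q$ divides $|G:H|$ (since $HP=G$ would force $|G:H| = |P : P \cap H|$, a power of $p$). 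Your version, via normality of the coatom and Cauchy in $G/H$, is arguably more careful on exactly the point you flagged: the paper's step ``$H$ maximal implies $HP = G$'' is valid only if the set product $HP$ is a subgroup, which maximality alone does not guarantee (indeed, a maximal subgroup of composite index exists in general, e.g.\ a subgroup isomorphic to $D_{10}$ of index $6$ in the alternating group on $5$ letters, where $HP$ is a proper subset that is not a subgroup). The implicit rescue in the paper is precisely your observation: under the contradiction hypothesis, $H \leq B < G$ and maximality force $H = B$, a coatom, which is normal in $G$ by Corollary 6, so $HP$ is a subgroup and the Sylow counting goes through. Thus your identification of normality of coatoms as the structural ingredient converting ``two primes in the index'' into ``not maximal'' is exactly right and makes explicit what the paper leaves tacit; the paper's argument, once that normality is noted, buys a slightly shorter uniform proof with no case split between $H < B$ and $H = B$, while yours buys a self-contained elementary argument (quotient plus Cauchy) that never needs to form a product set at all.
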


\begin{proof} 
If we prove Corollary 8 for any CD-minimal group, then by Corollary 3, the result will follow for any arbitrary group $X$ with $1 \in \CD(X)$.  So suppose $G$ is a CD-minimal group.  If $G$ is CD-simple, then $1 = Z(G) = H \neq G$ implies that $|G| = |G:1|$ contains at least two different primes.  So suppose $G$ is CD-minimal and not CD-simple.  Suppose $G \neq H \in \CD(G)$.  Then $H \leq B$ where $B \in \CD(G)$ is a coatom.  $|G : B|$ divides $|G : H|$, and by Corollary 6, $|G : B|$ contains primes $p \neq q$, and so  $|G : H|$ contains primes $p \neq q$.  Suppose by way of contradiction that $H$ is a maximal subgroup of $G$ and let $P$ be a Sylow $p$-subgroup of $G$.  Then $H < HP$.  Since $H$ is maximal in $G$, $HP = G$.  But $|G : HP|$ contains a prime $q$, a contradiction.
\end{proof} 

Let us denote the class of all finite groups with a trivial Chermak-Delgado subgroup by $\mathcal{T}$.  We summarize what is known about the class $\mathcal{T}$.  The class $\mathcal{T}$ is closed under direct products by Corollary 4, and if $G \in \mathcal{T}$ is a direct product, then each direct factor of $G$ is in $\mathcal{T}$, also by Corollary 4.  Of course, $G \in \mathcal{T}$ implies that $Z(G) = 1$, and so the intersection of $\mathcal{T}$ with the class of nilpotent groups is trivial.  $\mathcal{T}$ contains the class of all non-abelian simple groups.  Group theoretic properties of CD-simple groups are studied in \cite{McC17}, and examples of solvable CD-simple groups can be found there (the symmetric group on 4 elements is the smallest nontrivial example).  $G \in \mathcal{T}$ cannot factor non-trivially as a product of abelian subgroups by Corollary 4.  

Given $G \in \mathcal{T}$, we summarize what is known about the properties of the groups in $\CD(G)$.  Given $G \in \mathcal{T}$, $\CD(G)$ does not contain any non-trivial $p$-groups by Corollary 7, and $\CD(G)$ does not contain any subgroups whose index in $G$ is a (nontrivial) power of a prime by Corollary 8, and this implies that $\CD(G)$ does not contain any maximal subgroups of $G$.  It was shown in \cite{McC17} (see Proposition 6 there) that given $G \in \mathcal{T}$, $\CD(G)$ does not contain any non-trivial, cyclic subgroups that are normal in $G$.  Note that we can apply Proposition 2 part 2 to show that given $G \in \mathcal{T}$, $\CD(G)$ does not contain any non-trivial, cyclic groups.  Given $G \in \mathcal{T}$, we know that all of the atoms and all of the coatoms in $\CD(G)$ are normal subgroups of $G$ by Corollary 2; and if $G$ is not CD-simple, we know that all of the atoms in $\CD(G)$ are abelian groups by Proposition 7. 

Let $\mathcal{T}^*$ denote the class of all lattices $\CD(G)$ with $G \in \mathcal{T}$.  We summarize what is known about the lattice theoretic properties of $\mathcal{T}^*$. We know that $\CD(G)$ is modular and self-dual for any finite group $G$, and so these properties hold in particular for all lattices $\mathcal{L} \in \mathcal{T}^*$.  The class $\mathcal{T}^*$ is closed under Cartesian products by Corollary 4, and if $\mathcal{L} \in \mathcal{T}^*$ is lattice isomorphic to a Cartesian product of lattices, then each lattice appearing in that Cartesian product is in $\mathcal{T}^*$; this is by Theorem 1.  Given indecomposable $\mathcal{L} \in \mathcal{T}^*$ with $|\mathcal{L}| > 2$, and given $X \in \mathcal{L}$ an atom, and $Y \in \mathcal{L}$ a coatom, we have that $X \leq Y$ by Proposition 6.   A lattice $\mathcal{L}$ is said to be a \textbf{quasi-antichain of width} \bm{$n$} if $\mathcal{L}$ contains exactly $n$ atoms, and every atom in $\mathcal{L}$ is a coatom in $\mathcal{L}$.  We denote a quasi-antichain lattice of width $n$ by $\mathcal{M}_n$.  And so given indecomposable $\mathcal{L} \in \mathcal{T}^*$, $\mathcal{L}$ is not isomorphic to $\mathcal{M}_n$ for any $n > 1$.  $1 = G \in \mathcal{T}$ has that $\CD(G) = \{1\} \cong  \mathcal{M}_0$, and in the next section we construct a group $G \in \mathcal{T}$ so that $\CD(G) \cong \mathcal{M}_1$, but beyond that, $\CD(G)$ is never quasi-antichain for indecomposable $G \in \mathcal{T}$.  This is surprising considering the prevalence of $\mathcal{M}_n$ lattices in the theory of Chermak-Delgado $p$-group lattices, see \cite{Bree14} and \cite{An17}.  We will see in the next section that quasi-antichain $p$-group lattices do play a major role in the CD lattices of some CD-minimal group examples.

Suppose $\mathcal{L}$ is a lattice with greatest element $M$ and least element $B$.  Given $X \in \mathcal{L}$, we say that $Y \in \mathcal{L}$ is a \textbf{complement of} \bm{$X$} if $X \vee Y = M$ and $X \wedge Y = B$.  Given indecomposable $\mathcal{L} \in \mathcal{T}^*$, it follows from Proposition 5 that the only elements in $\mathcal{L}$ that have complements in $\mathcal{L}$ are the greatest and least elements in $\mathcal{L}$.  This further restricts the structure of indecomposable $\mathcal{L} \in \mathcal{T}^*$. 

\section{Examples of CD-Minimal Groups which are not CD-Simple}

Do there exist CD-minimal groups which are not CD-simple?  The answer is yes!  

$\mathbb{Z}_n$ denotes the group of integers modulo $n$, $C_n$ denotes the cyclic group of order $n$, $S_n$ denotes the symmetric group on $n$ elements, $Q_8$ denotes the quaternion group of order $8$, and $QD_{16}$ denotes the quasidihedral group of order 16.  Note that $QD_{16} = \langle r, s \,\, | \,\, r^8 = s^2 = 1, srs = r^3 \rangle$.  A group action of a group $T$ on a group $N$ is a group homomorphism $T \rightarrow Aut(N)$. The group action is said to be faithful if the kernel of the homomorphism is trivial.  The group action is said to be irreducible if there are no proper, nontrivial $T$-invariant subgroups of $N$.  Given a group action of a group $T$ on a group $N$, we let $[N]T$ denote the semidirect product.

\begin{lemma}
Suppose $1 \neq P$ is a $p$-group and $T$ is a group so that $T$ acts on $P$ and so that so that the restricted action of $T$ on $Z(P)$ is faithful and irreducible, and let $G = [P]T$.  If $1 \neq A \unlhd G$ and $A$ is abelian, then $Z(P) \leq A \leq C_G(A) \leq P$.
\end{lemma}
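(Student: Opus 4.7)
The plan is to prove the four inclusions separately; the middle one, $A \leq C_G(A)$, is automatic from $A$ being abelian, so the real content is $Z(P) \leq A$ and $C_G(A) \leq P$. Throughout, the key leverage comes from the hypothesis that $Z(P)$ is irreducible and faithful as a $T$-module: the only $T$-invariant subgroups of $Z(P)$ are $1$ and $Z(P)$, and no nontrivial element of $T$ centralizes $Z(P)$.

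First I would establish $A \cap P \neq 1$. If not, then since $A, P \unlhd G$, we have $[A, P] \leq A \cap P = 1$, so $A$ centralizes $P$ and in particular $Z(P)$. Then $AP/P \leq G/P \cong T$ acts trivially on $Z(P)$, and faithfulness forces $AP/P = 1$, so $A \leq P$ and hence $A = A \cap P = 1$, contradicting $A \neq 1$.

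For $Z(P) \leq A$, since $P$ is a $p$-group and $A \cap P$ is a nontrivial normal subgroup of $P$, nilpotency of $P$ gives $(A \cap P) \cap Z(P) \neq 1$, and this intersection equals $A \cap Z(P)$. Now $A \cap Z(P)$ is $T$-invariant (being an intersection of normal subgroups of $G$), so by irreducibility $A \cap Z(P) = Z(P)$, i.e., $Z(P) \leq A$.

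Finally, for $C_G(A) \leq P$, I would take $g \in C_G(A)$ and write $g = pt$ with $p \in P$ and $t \in T$. Since $Z(P) \leq A$, $g$ centralizes $Z(P)$. For any $z \in Z(P)$, $T$-invariance of $Z(P)$ gives $tzt^{-1} \in Z(P)$, and since $p \in P$ centralizes $Z(P)$, we obtain $gzg^{-1} = p(tzt^{-1})p^{-1} = tzt^{-1}$. Thus $tzt^{-1} = z$ for all $z \in Z(P)$, and faithfulness forces $t = 1$, so $g = p \in P$. No step looks deep; the mild obstacle is recognizing that one must first secure $A \cap P \neq 1$ in order to push the argument down into $Z(P)$ via nilpotency, rather than trying to work with $A$ directly in $G$.
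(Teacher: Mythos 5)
Your proof is correct and takes essentially the same route as the paper's: both arguments rest on the identity $C_G(Z(P)) = P$ coming from faithfulness, on nilpotency of the $p$-group $P$ to force a nontrivial intersection of $A$ with $Z(P)$, and on irreducibility to upgrade that intersection to $Z(P) \leq A$, after which $C_G(A) \leq C_G(Z(P)) = P$ finishes. Your only departures are cosmetic --- you first secure $A \cap P \neq 1$ and then apply nilpotency, where the paper contradicts $A \cap Z(P) = 1$ directly, and you write out the $g = pt$ computation establishing $C_G(Z(P)) = P$, which the paper leaves implicit in the phrase ``since $T$ acts faithfully on $Z(P)$.''
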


\begin{proof}
Note that if $A \cap Z(P) = 1$ then since $A \unlhd G$, and $Z(P) \unlhd G$, we have $A \leq C_G(Z(P)) = P$, since $T$ acts faithfully on $Z(P)$.  But since $1 \neq A \unlhd P$ and $1 \neq P$ is a $p$-group, we have $A \cap Z(P) > 1$, a contradiction.  So $A \cap Z(P) > 1$.  Now, since $T$ acts irreducibly on $Z(P)$, $Z(P) \leq A$.  And so $A \leq C_G(A) \leq C_G(Z(P)) = P$.
\end{proof}

For $i \in \{1,2\}$, let $P_i$ be a $2$-group so that $P_i \in \CD(P_i)$, so that $Z(P_i) \cong \mathbb{Z}_2 \times \mathbb{Z}_2$, and so that there exists a group $T_i = [H_i]K_i$ with $|H_i| = 3$ and $|K_i| = 2$ so that $T_i$ acts on $P_i$ and so that the restricted action of $T_i$ on $Z(P_i)$ is faithful and irreducible (and so $T_i \cong Aut(\mathbb{Z}_2 \times \mathbb{Z}_2) \cong S_3$).

Let $P_3$ be a $3$-group so that $P_3 \in \CD(P_3)$, so that $Z(P_3) \cong \mathbb{Z}_3 \times \mathbb{Z}_3$, and so that there exists a group $T_3 = [H_3]K_3$ with $|H_3| = 8$ and $|K_3| = 2$ so that $T_3$ acts on $P_3$ and so that the restricted action of $T_3$ on $Z(P_3)$ is faithful and irreducible (note that a Sylow $2$-subgroup of $Aut(\mathbb{Z}_3 \times \mathbb{Z}_3)$ is isomorphic to $QD_{16} = [\langle r \rangle]\langle s \rangle$.)

Let $G_1 = [P_1]T_1$, $G_2 = [P_2]T_2$, $G_3 = [P_3]T_3$, and let $G = G_1 \times G_2 \times G_3$.

Let ${\pi}_i$ denote the projection homomorphism of $G$ onto the $i$th coordinate.  Let $S \leq G$ so that for each $i \in \{1,2,3\}$, ${\pi}_i(S) =G_i$ and $S \cap G_i = [P_i]H_i$. So $S \cap (K_1 \times K_2 \times K_3)$ is the diagonal subgroup of $K_1 \times K_2 \times K_3$.

\begin{lemma}
Let $S$ be a group constructed as above.  If $H \in \CD(S)$, and if for each $i \in \{1,2,3\}$, $Z(P_i) \leq {\pi}_i(H) \leq P_i$, then $H, C_S(H) \in (\CD(P_1) \times \CD(P_2) \times \CD(P_3))$.
\end{lemma}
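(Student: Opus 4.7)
The plan is to use the faithful action of $T_i$ on $Z(P_i)$ (which applies because $Z(P_i) \leq \pi_i(H)$) to pin $C_S(H)$ down inside $P_1 \times P_2 \times P_3$, then exploit the Chermak-Delgado measure to force $H$ itself to split as the direct product of its three projections, and finally use the multiplicativity of the CD-measure across the three factors to conclude that each projection is a CD element of the corresponding $P_i$.

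First I would compute $C_S(H)$. Since conjugation in $G_1 \times G_2 \times G_3$ is coordinatewise, one gets $C_S(H) = S \cap (C_{G_1}(\pi_1(H)) \times C_{G_2}(\pi_2(H)) \times C_{G_3}(\pi_3(H)))$. The key observation is that because $T_i$ acts faithfully on $Z(P_i)$ and $Z(P_i) \leq \pi_i(H)$, any $t \in T_i$ centralizing $\pi_i(H)$ must centralize $Z(P_i)$ and hence be trivial. So $C_{G_i}(\pi_i(H)) = C_{P_i}(\pi_i(H)) \leq P_i \leq S$, giving the clean formula $C_S(H) = C_{P_1}(\pi_1(H)) \times C_{P_2}(\pi_2(H)) \times C_{P_3}(\pi_3(H)) \leq P_1 \times P_2 \times P_3$. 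Next, I would set $\tilde H = \pi_1(H) \times \pi_2(H) \times \pi_3(H)$, a subgroup of $P_1 \times P_2 \times P_3 \leq S$ that contains $H$. Since $C_S$ only sees the projections, $C_S(\tilde H) = C_S(H)$, so $m_S(\tilde H) \geq m_S(H) = m^*(S)$, which forces $H = \tilde H$. In particular $H$ is already a direct product of its projections, and $m_S(H)$ factors as $m_{P_1}(\pi_1(H)) \cdot m_{P_2}(\pi_2(H)) \cdot m_{P_3}(\pi_3(H))$.

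To finish, since $P_i \in \mathcal{CD}(P_i)$, we have $m^*(S) \geq m_S(P_1 \times P_2 \times P_3) = m^*(P_1) \cdot m^*(P_2) \cdot m^*(P_3)$. Combined with the trivial coordinatewise bound $m_{P_i}(\pi_i(H)) \leq m^*(P_i)$, the equality $\prod_i m_{P_i}(\pi_i(H)) = m^*(S)$ forces equality in each factor, so $\pi_i(H) \in \mathcal{CD}(P_i)$, and hence $H \in \mathcal{CD}(P_1) \times \mathcal{CD}(P_2) \times \mathcal{CD}(P_3)$. The corresponding statement for $C_S(H)$ then follows because each factor $C_{P_i}(\pi_i(H))$ is the centralizer in $P_i$ of a CD element and so is again in $\mathcal{CD}(P_i)$. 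The only genuinely delicate step is the faithful-action argument that confines $C_S(H)$ to $P_1 \times P_2 \times P_3$; once that is in place, the rest is measure bookkeeping.
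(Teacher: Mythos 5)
Your proof is correct and follows essentially the same route as the paper: the faithful action of $T_i$ on $Z(P_i)$ confines $C_S(H)$ to $P_1 \times P_2 \times P_3$ and splits it coordinatewise, $H$ is then forced to equal the product of its projections, and the measure factors across the three coordinates. The only difference is cosmetic: you make explicit the final bookkeeping step (via $m^*(S) \geq m_S(P_1 \times P_2 \times P_3) = m^*(P_1) \cdot m^*(P_2) \cdot m^*(P_3)$, forcing $m_{P_i}(\pi_i(H)) = m^*(P_i)$ in each factor) that the paper's terse proof leaves implicit, which makes your version self-contained rather than relying on the order computation $|S| = m^*(P_1) \cdot m^*(P_2) \cdot m^*(P_3)$ carried out in the proof of Theorem 2.
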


\begin{proof}
Note that $C_S(H) = C_S({\pi}_1(H) \times {\pi}_2(H) \times {\pi}_3(H)) \leq C_S(Z(P_1) \times Z(P_2) \times Z(P_3)) = P_1 \times P_2 \times P_3$ since each $T_i$ acts faithfully on $Z(P_i)$.  And so $C_S(H) = C_{P_1}({\pi}_1(H)) \times C_{P_2}({\pi}_2(H)) \times C_{P_3}({\pi}_3(H))$.  And since $H \in \CD(S)$, $H = {\pi}_1(H) \times {\pi}_2(H) \times {\pi}_3(H)$.  And so $H, C_S(H) \in (\CD(P_1) \times \CD(P_2) \times \CD(P_3))$.
\end{proof}

Theorem 2 below is an example of a CD lattice extension theorem.  There have been papers written about CD lattice extension theorems for $p$-groups, see \cite{Bre14}, \cite{An15}.  This is the first non $p$-group CD lattice extension theorem that the author is aware of.

\begin{theorem}
Let $S$ be a group constructed as above, and suppose $|P_1| = 2^a$, $|P_2| = 2^b$, and $|P_3| = 3^c$ for some $a,b,c$.  Then $S$ is a CD-minimal group of order $2^{a+b+4} \cdot 3^{c+2}$ and $\CD(S) = (\CD(P_1) \times \CD(P_2) \times \CD(P_3)) \cup \{1,S\}$.
\end{theorem}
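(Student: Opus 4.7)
The plan is to count $|S|$, exhibit the obvious elements of $\CD(S)$ attaining measure $|S|$, show that no other subgroup does, and deduce indecomposability via Corollary~9. Set $P=P_1\times P_2\times P_3$, so $Z(P)=Z(P_1)\times Z(P_2)\times Z(P_3)$. Counting gives $|S|=|P_1||H_1|\cdot|P_2||H_2|\cdot|P_3||H_3|\cdot 2=2^a\cdot 3\cdot 2^b\cdot 3\cdot 3^c\cdot 8\cdot 2=2^{a+b+4}\cdot 3^{c+2}$, where the trailing factor of $2$ comes from the diagonal copy of $K_1\times K_2\times K_3$ inside $S$. The center of $S$ is trivial: any $z\in Z(S)$ commutes with each $P_i\leq S$, so $\pi_i(z)\in C_{G_i}(P_i)=Z(P_i)$, and commutation with $H_i\leq S$ forces $\pi_i(z)\in Z(P_i)^{H_i}=1$ by faithful irreducibility. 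The same argument yields $Z(G_i)=1$ for each $i$. For any $H=H_1\times H_2\times H_3$ with $H_i\in\CD(P_i)$, the Chermak-Delgado subgroup of $P_i$ contains $Z(P_i)$ and lies in every element of $\CD(P_i)$, so $Z(P_i)\leq H_i$; hence $C_{G_i}(H_i)\leq C_{G_i}(Z(P_i))=P_i$, which gives $C_S(H)=\prod C_{P_i}(H_i)\leq P\leq S$. Using $H_i,P_i\in\CD(P_i)$ yields $m_S(H)=\prod|H_i||C_{P_i}(H_i)|=\prod|P_i||Z(P_i)|=|P||Z(P)|=|S|$; trivially $m_S(1)=m_S(S)=|S|$. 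So $m^*(S)\geq|S|$, and $(\CD(P_1)\times\CD(P_2)\times\CD(P_3))\cup\{1,S\}\subseteq\CD(S)$ once $m^*(S)=|S|$ is established.

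The technical core is the upper bound and the classification of maximal-measure subgroups. Let $H\leq S$ with $m_S(H)\geq|S|$ and set $X_i=\pi_i(H)$. Since $C_G(H)=\prod C_{G_i}(X_i)$, we have $C_S(H)=S\cap\prod C_{G_i}(X_i)$. If $X_i=G_i$ for every $i$, then $C_G(H)\leq\prod Z(G_i)=1$, forcing $C_S(H)=1$ and hence $H=S$. Otherwise I aim to prove $Z(P_i)\leq X_i\leq P_i$ for every $i$, whereupon Lemma~3 identifies $H\in\CD(P_1)\times\CD(P_2)\times\CD(P_3)$. Passing to the normal closure $\tilde{H}=H^S\in\CD(S)$ makes each $\pi_i(\tilde{H})$ normal in $G_i$; when abelian, Lemma~2 forces $Z(P_i)\leq\pi_i(\tilde{H})\leq P_i$. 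The remaining possibilities --- vanishing $X_i$ or $X_i$ with nontrivial $T_i$-component --- are ruled out via the bound $m_S(H)\leq|H|\prod|C_{G_i}(X_i)|\leq\prod m_{G_i}(X_i)$ combined with the observation that any $T_i$-contribution strictly shrinks $C_{G_i}(X_i)$ inside $P_i$ (by faithful action on $Z(P_i)$), while the diagonal condition $|G:S|=4$ prevents the product-measure bound from being realized in $S$ unless every $X_i$ sits inside $P_i$ and contains $Z(P_i)$.

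For indecomposability, Corollary~9 reduces to showing $\CD(S)$ is lattice-indecomposable. Any candidate complement pair $X,Y\in\CD(S)\setminus\{1,S\}$ lies in $\CD(P_1)\times\CD(P_2)\times\CD(P_3)$, so $X,Y\leq P$ and $X\vee Y=XY\leq P<S$, which prevents $X\vee Y=S$; hence no nontrivial lattice factorization exists. Combined with $1\in\CD(S)$ from the classification, this makes $S$ CD-minimal, and $Z(P)\in\CD(S)\setminus\{1,S\}$ shows $S$ is not CD-simple. The principal obstacle is the case analysis in the second paragraph: showing that no $H\leq S$ whose projection $X_i$ carries a nontrivial $T_i$-component can attain $m_S(H)=|S|$. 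This requires Lemma~2 applied after passing to the normal closure, together with tight centralizer estimates inside $G_i=[P_i]T_i$ and careful use of the index-$4$ relation $|G:S|=4$.
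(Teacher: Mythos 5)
Your overall frame (compute $|S|$, exhibit the measure-$|S|$ subgroups, bound everything else, then deduce indecomposability) is sound, your order count and the inclusion $(\CD(P_1) \times \CD(P_2) \times \CD(P_3)) \cup \{1,S\} \subseteq \CD(S)$ are correct, and your indecomposability argument --- any complemented pair outside $\{1,S\}$ lies in the product lattice, hence inside $P_1 \times P_2 \times P_3$, so its join cannot be $S$ --- is a valid alternative to the paper's route via the uniqueness of the atom (though the reduction you want is the paper's Corollary 5, not a ``Corollary 9''). But the part you yourself flag as the ``principal obstacle'' is a genuine gap, and the route you sketch for it would fail. First, your bound $m_S(H) \leq \prod_i m_{G_i}(X_i)$ is too weak to do the job: since $Z(G_i) = 1$, a projection $X_i = G_i$ contributes $m_{G_i}(G_i) = |G_i| = |P_i||T_i| > |P_i||Z(P_i)| = m^*(P_i)$, because $|T_1| = |T_2| = 6 > 4 = |Z(P_1)|$ and $|T_3| = 16 > 9 = |Z(P_3)|$; the product bound can therefore be as large as $|G| = 4|S|$, and your heuristic that a nontrivial $T_i$-component ``strictly shrinks $C_{G_i}(X_i)$'' does not close this, since the shrinkage of the centralizer can be outweighed by the growth of $|X_i|$. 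The missing factor of $4$ can only come from the diagonal structure of $S$, and you assert that $|G:S| = 4$ ``prevents the product-measure bound from being realized'' without supplying any mechanism. Second, your normal-closure maneuver does not deliver what you need: $\tilde{H} = H^S$ is normal but in general not abelian, and Lemma 2 applies only to abelian normal subgroups; your parenthetical ``when abelian'' leaves exactly the hard case open.

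The paper closes both gaps by reordering the argument so that a measure estimate is ever needed only for an abelian normal subgroup. It applies the projection analysis first to the Chermak-Delgado subgroup $T$ of $S$ itself, which is automatically abelian and characteristic, so Lemma 2 applies to each nontrivial $\pi_i(T)$. The quantitative heart is the mixed case, where $\pi_{j}(A) = 1$ for some $j$ but $\pi_{i_0}(A) \neq 1$ for some $i_0$: then $Z(P_{i_0}) \leq \pi_{i_0}(A)$ forces the $i_0$-component of any centralizing element into $P_{i_0}$ (faithfulness), the diagonal embedding of $K_1 \times K_2 \times K_3$ in $S$ then kills the $K_j$-part, so $\pi_j(C_S(A)) \leq [P_j]H_j$ and $|P_j||H_j| < |P_j||Z(P_j)| = m^*(P_j)$; the inequality $|H_j| < |Z(P_j)|$ (namely $3 < 4$ and $8 < 9$) is precisely the quantitative use of the diagonal that your sketch lacks, and it is why the construction splits $T_i$ as $[H_i]K_i$. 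This gives $m_S(A) < |S|$ in the mixed case, hence $T = 1$, $m^*(S) = |S|$, and $1, S \in \CD(S)$. The rest of $\CD(S)$ is then classified with no further measure estimates: $Z(P_1) \times Z(P_2) \times Z(P_3)$ is the unique atom (atoms are normal by Corollary 2, and a second atom would lie in the nilpotent group $P_1 \times P_2 \times P_3$ and so meet its center nontrivially), whence for $H \in \CD(S) \setminus \{1,S\}$ both $H$ and $C_S(H)$ contain the atom, forcing $H \leq C_S(Z(P_1) \times Z(P_2) \times Z(P_3)) = P_1 \times P_2 \times P_3$, and Lemma 3 finishes. It is this normality-and-atom machinery, not a centralizer estimate, that excludes subgroups whose projections carry $T_i$-components; until you supply an argument of comparable force, your second paragraph is a plan rather than a proof.
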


\begin{proof}
Let $S$ be a group constructed as above, and suppose $|P_1| = 2^a$, $|P_2| = 2^b$, and $|P_3| = 3^c$ for some $a,b,c$.  And so $|Z(P_1)| = 4$, $|Z(P_2)| = 4$, and $|Z(P_3)| = 9$.  And since $P_1 \in \CD(P_1)$, $P_2 \in \CD(P_2)$, and $P_3 \in \CD(P_3)$,  we have that $2^{a+b+4} \cdot 3^{c+2} = m^*(P_1) \cdot m^*(P_2) \cdot m^*(P_3) = |S| = |P_1||H_1||P_2||H_2||P_3||H_3|2$.

We now establish that the Chermak-Delgado subgroup of $S$ is $1$, and along the way we determine all of the abelian, normal subgroups of $S$ that are in $\CD(S)$.  Note that $m_S(1) = |S|$.  We know that the Chermak-Delgado subgroup of $S$ is a characteristic, abelian subgroup of $S$, and so let us suppose that $1 \neq A \unlhd S$ with $A$ abelian and $A \in \CD(S)$.  Since $A \unlhd S$, for each $i \in \{1,2,3\}$, ${\pi}_i(A) \unlhd {\pi}_i(S) = G_i$.  By Lemma 2, if ${\pi}_i(A) \neq 1$, then $Z(P_i) \leq {\pi}_i(A) \leq C_{G_i}({\pi}_i(A)) \leq P_i$.  And so $C_{G_i}({\pi}_i(A)) = {\pi}_i(C_S(A))$ and $|{\pi}_i(A)||{\pi}_i(C_S(A))| \leq m^*(P_i)$.  We now show that for each $i \in \{1,2,3\}$, ${\pi}_i(A) \neq 1$.  Suppose not.  Then there is $j_0 \in \{1,2,3\}$ so that ${\pi}_{j_0}(A) = 1$.  And since $1 \neq A$, there is $i_0 \in \{1,2,3\}$ so that ${\pi}_{i_0}(A) \neq 1$.  And so if ${\pi}_{j}(A) = 1$, then ${\pi}_j(C_S(A)) = [P_j]H_j$.  This is true because $T_{i_0}$ acts faithfully on $Z(P_{i_0}) \leq {\pi}_{i_0}(A)$, and so the diagonal subgroup of $K_1 \times K_2 \times K_3$ will not centralize $A$.  And so if ${\pi}_j(A) = 1$, then $|{\pi}_j(C_S(A))| = |P_j||H_j| < |P_j||Z(P_j)| = m^*(P_j)$.  And so $m_S(A) = |A||C_S(A)| \leq |{\pi}_1(A)||{\pi}_2(A)||{\pi}_3(A)||{\pi}_1(C_S(A))||{\pi}_2(C_S(A))||{\pi}_3(C_S(A))| < m^*(P_1) \cdot m^*(P_2) \cdot m^*(P_3) = |S|$, which contradicts $A \in \CD(S)$.

So for each $i \in \{1,2,3\}$, ${\pi}_i(A) \neq 1$.  And so by Lemma 2, for each $i \in \{1,2,3\}$, $Z(P_i) \leq {\pi}_i(A) \leq P_i$.  And so by Lemma 3, $A, C_S(A) \in (\CD(P_1) \times \CD(P_2) \times \CD(P_3))$.  And so $m_S(A) = m^*(P_1) \cdot m^*(P_2) \cdot m^*(P_3) = |S|$.  Hence we have shown that the Chermak-Delgado subgroup of $S$ is the identity, and hence $m^*(S) = |S|$, and so $(\CD(P_1) \times \CD(P_2) \times \CD(P_3)) \cup \{1,S\} \subseteq \CD(S)$.  Furthermore, we know that if $1 \neq A \unlhd S$ with $A$ abelian and $A \in \CD(S)$, then $A \in (\CD(P_1) \times \CD(P_2) \times \CD(P_3))$.  

Note that $Z(P_1) \times Z(P_2) \times Z(P_3)$ is an atom in $\CD(S)$.  This is true because otherwise, $Z(P_1) \times Z(P_2) \times Z(P_3)$ would properly contain an atom $1 \neq A$ of $\CD(S)$.  By Corollary 2, $A \unlhd S$.  But then $A \in (\CD(P_1) \times \CD(P_2) \times \CD(P_3))$, a contradiction.  We now show that $Z(P_1) \times Z(P_2) \times Z(P_3)$ is the unique atom in $\CD(S)$.  Suppose not.  So $1 \neq N$ is another atom in $\CD(S)$.  So by Corollary 2, $N \unlhd S$.  And since $N \cap (Z(P_1) \times Z(P_2) \times Z(P_3)) = 1$, we have that $N \leq C_S(Z(P_1) \times Z(P_2) \times Z(P_3)) = P_1 \times P_2 \times P_3$.  And since $P_1 \times P_2 \times P_3$ is nilpotent, we have that $N \cap (Z(P_1) \times Z(P_2) \times Z(P_3)) > 1$, a contradiction.  So $Z(P_1) \times Z(P_2) \times Z(P_3)$ is the unique atom in $\CD(S)$.

If $H \in \CD(S)$ with $H \neq 1$ and $H \neq S$, then $Z(P_1) \times Z(P_2) \times Z(P_3) \leq H,C_S(H)$, and so $C_S(H),H \leq P_1 \times P_2 \times P_3$, and so by Lemma 3, $H \in (\CD(P_1) \times \CD(P_2) \times \CD(P_3))$.  Thus $\CD(S) = (\CD(P_1) \times \CD(P_2) \times \CD(P_3)) \cup \{1,S\}$.  

Finally, since $\CD(S)$ contains a unique atom, we have that $\CD(S)$ is an indecomposable lattice.  And so by Corollary 5, $S$ is indecomposable.  Thus, $S$ is a CD-minimal group with the prescribed properties.
\end{proof}

We desire examples of groups $P_1, P_2, P_3$ and acting groups $T_1, T_2, T_3$ in Theorem 2.  One can take $P_1 = \mathbb{Z}_2 \times \mathbb{Z}_2$, $P_2 = \mathbb{Z}_2 \times \mathbb{Z}_2$, $P_3 = \mathbb{Z}_3 \times \mathbb{Z}_3$, and $T_1 = Aut(\mathbb{Z}_2 \times \mathbb{Z}_2)$, $T_2 = Aut(\mathbb{Z}_2 \times \mathbb{Z}_2)$, and $T_3$ to be a Sylow $2$-subgroup of $Aut(\mathbb{Z}_3 \times \mathbb{Z}_3)$.  Let $S_0$ be the CD-minimal group constructed using each of these examples.  The author thanks Peter Hauck for providing this construction. The group $S_0$ has order $2^8 \cdot 3^4 = 20736$.  Figure 1 shows the Chermak-Delgado lattice of $S_0$.

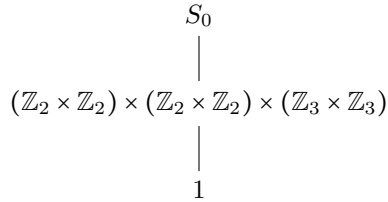
\begin{figure}[h]
\begin{tikzpicture}[node distance=2cm]
\title{Chermak-Delgado lattice}
\node(S)                           {$S_0$};
\node(A)       [below=\mydistance of S] {$( \mathbb{Z}_2 \times \mathbb{Z}_2) \times ( \mathbb{Z}_2 \times \mathbb{Z}_2) \times ( \mathbb{Z}_3 \times \mathbb{Z}_3)$};
\node(1)      [below=\mydistance of A]  {$1$};
\draw(S)      -- (A);
\draw(A)      -- (1);
\end{tikzpicture}
\caption{Chermak-Delgado lattice of $S_0$}
\end{figure}

And so $\CD(S_0) \cong \mathcal{M}_1$.  Note that the unique atom/coatom, $A$, in $\CD(S_0)$ is abelian, $A \unlhd S_0$, and $|A|$ contains at least two primes.  This (thankfully) agrees with the theory established in Section 1.

Proposition 8 below appears in \cite{Bree14}.  The proof makes use of Exercise 39 in \cite{Hup67}.
\begin{proposition}
Let $p$ be a prime and $n$ a positive integer.  Let $P$ be the group of all $3 \times 3$ lower triangular matrices over $GF(p^n)$ with $1$'s along the diagonal.  The Chermak-Delgado lattice of $P$ is a quasi-antichain of width $p^n+1$ and all subgroups in the middle antichain are abelian.
\end{proposition}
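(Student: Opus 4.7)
The plan is to identify $P$ as the Heisenberg group over $GF(p^n)$ and exploit the $GF(p^n)$-bilinearity of the commutator form. First I would verify by direct matrix computation that $|P| = p^{3n}$, that $Z := Z(P)$ consists of the matrices with vanishing $(2,1)$ and $(3,2)$ entries (so $|Z| = p^n$), and that the commutator $[x,y]$ depends only on the cosets $xZ$, $yZ$. Identifying $V := P/Z$ with $GF(p^n)^2$ via the $(2,1)$ and $(3,2)$ entries, and $Z$ with $GF(p^n)$ via the $(3,1)$ entry, the commutator descends to an alternating $F_p$-bilinear map $\omega : V \times V \to Z$ given by $\omega((a,b),(a',b')) = ab' - a'b$. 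The crucial observation is that $\omega$ is in fact $GF(p^n)$-bilinear and non-degenerate.

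The key lemma I would then establish is that for every $F_p$-subspace $\bar{H} \leq V$, the annihilator $\bar{H}^{\perp} = \{ v \in V : \omega(v, \bar{H}) = 0 \}$ is automatically a $GF(p^n)$-subspace of $V$, because $\omega(\lambda v, h) = \lambda\, \omega(v,h)$ for $\lambda \in GF(p^n)$. Since the $GF(p^n)$-subspaces of $V$ are exactly $0$, the $p^n+1$ one-dimensional lines $L_1, \ldots, L_{p^n+1}$, and $V$, the order $|\bar{H}^{\perp}|$ can only be $1$, $p^n$, or $p^{2n}$. Moreover each line is self-perpendicular ($L_i^{\perp} = L_i$), since any $1$-dimensional subspace of a $2$-dimensional space is Lagrangian for a non-degenerate alternating form. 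Pairing $\bar{H}^{\perp} \supseteq L_i \iff \bar{H} \subseteq L_i^{\perp} = L_i$ then yields: $\bar{H}^{\perp} = V$ iff $\bar{H} = 0$; $\bar{H}^{\perp} = L_i$ iff $0 \neq \bar{H} \leq L_i$; and $\bar{H}^{\perp} = 0$ otherwise.

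With the lemma in hand the Chermak-Delgado computation is short. For any $H$ with $Z \leq H \leq P$, $C_P(H)$ is the preimage of $\bar{H}^{\perp}$ in $P$, so $m_P(H) = p^{2n} \cdot |\bar{H}| \cdot |\bar{H}^{\perp}|$. A case analysis using the lemma shows $m_P(H) = p^{4n}$ precisely when $\bar{H} \in \{0, L_1, \ldots, L_{p^n+1}, V\}$, and strictly smaller otherwise (using $|\bar{H}| \leq p^{n-1}$ when $0 \neq \bar{H} \subsetneq L_i$, and $|\bar{H}| \leq p^{2n-1}$ when $\bar{H} \subsetneq V$ is not contained in any $L_i$). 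To handle subgroups $H$ with $Z \not\leq H$, I would note that $C_P(H) = C_P(HZ)$ while $|HZ| > |H|$, so $m_P(H) < m_P(HZ) \leq p^{4n}$. Thus $\CD(P) = \{ Z, P, A_{L_1}, \ldots, A_{L_{p^n+1}} \}$, where $A_{L_i}$ is the preimage of $L_i$; this is a quasi-antichain of width $p^n + 1$, and each $A_{L_i}$ is abelian because $L_i$ is isotropic for the alternating form.

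The main obstacle is setting up the $GF(p^n)$-structure cleanly: the problem only collapses onto $p^n + 1$ projective points once one notices that $\omega$ is $GF(p^n)$-bilinear, which forces annihilators of arbitrary $F_p$-subspaces to be $GF(p^n)$-stable. Without this observation one would be stuck enumerating the much larger lattice of $F_p$-subspaces of $V$, which presumably is why the proof in \cite{Bree14} invokes Exercise 39 in \cite{Hup67} rather than working with the form directly.
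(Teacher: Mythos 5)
Your proof is correct, but note that the paper does not actually prove this proposition: it states the result with a citation to \cite{Bree14}, remarking only that the proof there uses Exercise 39 in \cite{Hup67}. So your argument is a genuinely different, self-contained route. Every step checks out: the commutator descends to a non-degenerate alternating form $\omega$ on $V = P/Z(P)$ because $P' = Z(P)$ is central; the crucial observation that $\omega$ is $GF(p^n)$-bilinear (not merely $\mathbb{F}_p$-bilinear) correctly forces the annihilator of an \emph{arbitrary} $\mathbb{F}_p$-subspace $\bar{H} \leq V$ to be one of the $p^n + 3$ many $GF(p^n)$-subspaces of $V$; the self-perpendicularity $L^{\perp} = L$ of lines holds in all characteristics, including $p = 2$, since $\omega(v,v) = 0$; the measure formula $m_P(H) = p^{2n}|\bar{H}||\bar{H}^{\perp}|$ for $Z(P) \leq H$, the strict bounds $m_P(H) \leq p^{4n-1}$ in the two degenerate cases, and the reduction $m_P(H) < m_P(HZ(P))$ when $Z(P) \not\leq H$ together pin down $\CD(P)$ exactly as claimed, with abelianness of the middle subgroups following from isotropy of the lines. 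It is worth noting that your method is precisely the technique the paper itself invokes later (via Example 2.8 of \cite{Gla06}) to describe $\CD(R)$ for extraspecial $R$ --- a non-degenerate form induced by commutation on the central quotient --- upgraded by the $GF(p^n)$-linearity observation; what this buys over the paper's bare citation is a short, uniform computation identifying the middle antichain with the $p^n + 1$ points of the projective line over $GF(p^n)$. Your closing guess about why \cite{Bree14} routes through Huppert's exercise is speculation and plays no role in the argument, so it does it no harm.
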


\begin{figure}[h]
\begin{tikzpicture}[node distance=2cm]
\title{Chermak-Delgado lattice}
\node(S)                           {$P$};
\node(A3)       [below=0.65cm of S] {$A_3$};
\node(A2)       [left=0.2cm of A3] {$A_2$};
\node(A1)       [left=0.2cm of A2] {$A_1$};
\node(A4)       [right=0.2cm of A3] {$A_4$};
\node(A5)       [right=0.2cm of A4] {$A_5$};
\node(1)      [below=\mydistance of A]  {$Z(P)$};
\draw(S)      -- (A3);
\draw(S)      -- (A2);
\draw(S)      -- (A1);
\draw(S)      -- (A4);
\draw(S)      -- (A5);
\draw(A3)      -- (1);
\draw(A2)      -- (1);
\draw(A1)      -- (1);
\draw(A4)      -- (1);
\draw(A5)      -- (1);
\end{tikzpicture}
\caption{$\CD(P)$ where $P$ is the group of all $3 \times 3$ lower triangular matrices over $GF(4)$ with $1$'s along the diagonal. }
\end{figure}
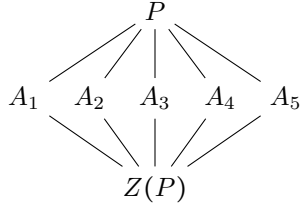

\begin{proposition}
Let $p$ be a prime and $n$ a positive integer.  Let $P$ be the group of all $3 \times 3$ lower triangular matrices over $GF(p^n)$ with $1$'s along the diagonal. Then there exists $T \leq Aut(P)$ so that $T = [H]K$ with $|H| = p^n - 1$ and $|K| = n$, and so that the restricted action of $T$ on $Z(P) \cong \underset{n\text{ times}}{\mathbb{Z}_p \times \dots \times \mathbb{Z}_p}$ is faithful and irreducible.
\end{proposition}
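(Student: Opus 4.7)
The plan is to realize $T$ as (an embedding into $\mathrm{Aut}(P)$ of) the one-dimensional semilinear group $\Gamma L_1(F)$ over $F = GF(p^n)$, built from scalar multiplications by $F^*$ (giving $H$) together with the Frobenius (giving $K$). I would first parameterize $P$ by triples $(a,b,c) \in F^3$ with product $(a_1,b_1,c_1)(a_2,b_2,c_2) = (a_1+a_2,\, b_1+b_2,\, c_1+c_2+b_1 a_2)$; a direct check identifies $Z(P)$ with $\{(0,0,c):c\in F\}$, which is $(F,+) \cong \mathbb{Z}_p^n$.

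For $H$, I define for each $\lambda \in F^*$ the map $\phi_\lambda(a,b,c) = (\lambda a,\, b,\, \lambda c)$. The key calculation is that $\phi_\lambda$ respects the product: the cocycle term $b_1 a_2$ picks up exactly one factor of $\lambda$ from the $a_2$ slot, matching the overall scaling of $c$. The composition law $\phi_{\lambda_1}\phi_{\lambda_2} = \phi_{\lambda_1\lambda_2}$ then gives an injection $F^* \hookrightarrow \mathrm{Aut}(P)$ with image $H$ of order $p^n-1$. For $K$, let $\sigma$ denote the Frobenius of $F$ over $\mathbb{F}_p$ and set $\phi_\sigma(a,b,c)=(a^p,b^p,c^p)$; this is an automorphism because Frobenius is additive and $(b_1 a_2)^p = b_1^p a_2^p$. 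Put $K = \langle \phi_\sigma \rangle$, which has order $n$. A direct computation gives $\phi_\sigma\phi_\lambda\phi_\sigma^{-1} = \phi_{\lambda^p}$, so $K$ normalizes $H$, and $H \cap K = 1$ because equating a generic $\phi_\lambda$ with $\phi_\sigma^i$ forces $b = b^{p^i}$ for every $b \in F$ (hence $n \mid i$) and then $\lambda = 1$. Thus $T := HK$ is the semidirect product $[H]K$ of order $(p^n-1)n$ inside $\mathrm{Aut}(P)$.

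It remains to check the action of $T$ on $Z(P) \cong (F,+)$: the subgroup $H$ acts by multiplication by $\lambda \in F^*$, and $K$ acts by $c \mapsto c^p$. Faithfulness is immediate by the same argument as for $H \cap K = 1$: if $\phi_\lambda\phi_\sigma^i$ fixes $Z(P)$ pointwise, setting $c=1$ gives $\lambda = 1$, and then $c^{p^i}=c$ for all $c \in F$ forces $n \mid i$. Irreducibility follows because a nonzero $T$-invariant $\mathbb{F}_p$-subspace $V \leq F$ is closed under multiplication by $F^*$ via $H$, so it is an $F$-subspace of the one-dimensional $F$-vector space $F$, whence $V = F$. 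The only real subtlety is the asymmetric choice of $\phi_\lambda$ (scaling $a$ and $c$ but fixing $b$), which is forced by the noncommutativity of $P$ via the cocycle term $b_1 a_2$; once that choice is made, the rest of the argument is essentially bookkeeping.
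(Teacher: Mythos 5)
Your proposal is correct and takes essentially the same approach as the paper: the paper likewise parameterizes $P$ by coordinates, defines the scalar automorphism $r$ sending $(a,b,c)$ to $(x\cdot a, x\cdot b, c)$ (your $\phi_\lambda$ with $\lambda = x$ a generator of the unit group, up to a relabeling of the two lower off-diagonal coordinates) together with the Frobenius automorphism $s$, and sets $T = [\langle r \rangle]\langle s \rangle$ with $s^{-1}rs = r^p$. The only difference is expository: you verify the automorphism property, $H \cap K = 1$, faithfulness, and irreducibility in detail, where the paper records these as brief remarks.
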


\begin{proof}
Note that $\begin{bmatrix}
1 & 0 & 0\\
a_1 & 1 & 0\\
b_1 & c_1 & 1 \end{bmatrix} \cdot \begin{bmatrix}
1 & 0 & 0\\
a_2 & 1 & 0\\
b_2 & c_2 & 1 \end{bmatrix} = \begin{bmatrix}
1 & 0 & 0\\
a_1 + a_2 & 1 & 0\\
b_1 + b_2 + c_1a_2 & c_1 + c_2 & 1 \end{bmatrix}$, and note that $Z(P) = \Biggl\{ \begin{bmatrix}
1 & 0 & 0\\
0 & 1 & 0\\
b & 0 & 1 \end{bmatrix} \,\, | \,\, b \in GF(p^n) \Biggr\} \cong \underset{n\text{ times}}{\mathbb{Z}_p \times \dots \times \mathbb{Z}_p}$.  Let $x$ be a generator of the group of units of $GF(p^n)$.  Define $\Bigg( \begin{bmatrix}
1 & 0 & 0\\
a & 1 & 0\\
b & c & 1 \end{bmatrix} \Bigg)^r = \begin{bmatrix}
1 & 0 & 0\\
x \cdot a & 1 & 0\\
x \cdot b & c & 1 \end{bmatrix}$.  Note that $r \in Aut(P)$ and $H = \langle r \rangle$ has order $p^n - 1$.  Define $\Bigg( \begin{bmatrix}
1 & 0 & 0\\
a & 1 & 0\\
b & c & 1 \end{bmatrix} \Bigg)^s = \begin{bmatrix}
1 & 0 & 0\\
a^p & 1 & 0\\
b^p & c^p & 1 \end{bmatrix}$.  Note that $s \in Aut(P)$ and $K = \langle s \rangle$ has order $n$.  Note that $s^{-1}rs = r^p$, and $T = [H]K \leq Aut(P)$ and acts faithfully and irreducibly on $Z(P)$.
\end{proof}

Proposition 9 with $p=2$ and $n=2$ yields examples of groups $P_1,P_2$ and $T_1,T_2$ in Theorem 2.  Proposition 9 with $p=3$ and $n=2$ yields examples of groups $P_3$ and $T_3$ in Theorem 2.  Proposition 8 tells us that each $P_i \in \CD(P_i)$ and tells us the structure of $\CD(P_i)$.  

The following example was provided by Ben Brewster, and yields examples of groups $P_1,P_2$ and $T_1$, $T_2$ in Theorem 2.  Let $D = \{ (x,x,x) \,\, | \,\, x \in Z(Q_8) \}$, and let $P = (Q_8 \times Q_8 \times Q_8)/D$.  Consider the natural action of $T = S_3$ on $P$ via permutation of the coordinates.  Then $T$ acts faithfully and irreducibly on $Z(P) \cong \mathbb{Z}_2 \times \mathbb{Z}_2$.  It can be shown that $\CD(P) = \{ (X \times Y \times Z)/D \,\, | \,\, X,Y,Z \in \CD(Q_8) \}$.  The author verified this through use of GAP.  We know that $\CD(Q_8)$ consists of all of the subgroups of $Q_8$ that contain $Z(Q_8)$, and so $\CD(Q_8)$ is a quasi-antichain of width 3.  And so $\CD(P) \cong \mathcal{M}_3 \times \mathcal{M}_3 \times \mathcal{M}_3$.  The author would like to remark that $Q_8$ is an example of an extraspecial $p$-group, and it is true that for any extraspecial $p$-group, $R$, $\CD(R)$ consists of all of the the subgroups of $R$ that contain $Z(R)$.  This was mentioned in Example 2.8 in \cite{Gla06}; the result follows from considering a non-degenerate bilinear form induced by commutation that is endowed upon $R/Z(R)$ when viewed as a vector space over $\mathbb{F}_p$.  The author would like to remark that in a recent paper, \cite{Ta17}, the groups $G$ having the propery that $\CD(G)$ consists of all of the subgroups of $G$ that contain $Z(G)$ are classified.

We collect all of the above examples that work in Theorem 2, and we arrive at 12 examples of CD-minimal groups that are not CD-simple, with each example having different CD lattice.  The table below shows possibilities for $\CD(P_1)$, $\CD(P_2)$, and $\CD(P_3)$ in the construction of $S$.  And so $\CD(S) = (\CD(P_1) \times \CD(P_2) \times \CD(P_3)) \cup \{1, S\}$.  

\begin{table}[h]

  \begin{tabular}{|c|c|c|}
      \hline

      $\CD(P_1)$    &   $\CD(P_2)$     &   $\CD(P_3)$         \\ \hline

 $\mathcal{M}_0$    &   $\mathcal{M}_0$     &   $\mathcal{M}_0$         \\ \hline

$\mathcal{M}_0$    &   $\mathcal{M}_5$     &   $\mathcal{M}_0$         \\ \hline

$\mathcal{M}_0$    &   $\mathcal{M}_3 \times \mathcal{M}_3 \times \mathcal{M}_3$     &   $\mathcal{M}_0$         \\ \hline

 $\mathcal{M}_5$    &   $\mathcal{M}_5$     &   $\mathcal{M}_0$         \\ \hline

 $\mathcal{M}_5$    &   $\mathcal{M}_3 \times \mathcal{M}_3 \times \mathcal{M}_3$     &   $\mathcal{M}_0$         \\ \hline

 $\mathcal{M}_3 \times \mathcal{M}_3 \times \mathcal{M}_3$    &   $\mathcal{M}_3 \times \mathcal{M}_3 \times \mathcal{M}_3$     &   $\mathcal{M}_0$         \\ \hline

  $\mathcal{M}_0$    &   $\mathcal{M}_0$     &   $\mathcal{M}_{10}$         \\ \hline

$\mathcal{M}_0$    &   $\mathcal{M}_5$     &   $\mathcal{M}_{10}$         \\ \hline

$\mathcal{M}_0$    &   $\mathcal{M}_3 \times \mathcal{M}_3 \times \mathcal{M}_3$     &   $\mathcal{M}_{10}$         \\ \hline

 $\mathcal{M}_5$    &   $\mathcal{M}_5$     &   $\mathcal{M}_{10}$         \\ \hline

 $\mathcal{M}_5$    &   $\mathcal{M}_3 \times \mathcal{M}_3 \times \mathcal{M}_3$     &   $\mathcal{M}_{10}$         \\ \hline

 $\mathcal{M}_3 \times \mathcal{M}_3 \times \mathcal{M}_3$    &   $\mathcal{M}_3 \times \mathcal{M}_3 \times \mathcal{M}_3$     &   $\mathcal{M}_{10}$         \\ \hline

\end{tabular}
\end{table}

We end with some open questions.  Obviously, one desires more theory on the class of groups $\mathcal{T}$ and the class of lattices $\mathcal{T}^*$, and more examples of groups and lattices in each class would further that theory.  Is the appearance of quasi-antichain $p$-groups lattices in this construction mere coincidence, perhaps because of the small order of groups involved in the construction, or is there a deeper reason that quasi-antichain lattices appear prevalently in CD lattices?   For groups in $\mathcal{T}$ and lattices in $\mathcal{T}^*$, we have established a relationship between direct products of groups and Cartesian products of lattices.  Are there more relationships between lattice theoretic properties of $\CD(G)$ and group theoretic properties of $G$? 

\bibliographystyle{amsplain}
\bibliography{references}

\end{document}